\newtheorem{thm}{Theorem}[section]
\newtheorem{prop}[thm]{Proposition}
\newtheorem{cor}[thm]{Corollary}
\theoremstyle{remark}
\newtheorem{rem}[thm]{Remark}
\theoremstyle{definition}
\newtheorem{defn}[thm]{Definition}
\newcommand{\cl}{c\`adl\`ag}
\renewcommand{\phi}{\varphi} 
\newcommand{\E}{\mathrm{E}} 
\newcommand{\<}{\langle} 
\renewcommand{\>}{\rangle}
\newcommand{\bbc}{\mathbb{C}}
\newcommand{\bbr}{\mathbb{R}}
\newcommand{\bbq}{\mathbb{Q}}
\newcommand{\bbn}{\mathbb{N}}
\newcommand{\abs}[1]{\left| #1 \right|}
\colorlet{Changes@Color}{red}
\begin{document}

\title{Time change equations for L\'evy type processes}
\date{\today}

\author[Kr\"uhner]{Paul Kr\"uhner}
\address[Paul Kr\"uhner]{\\
Vienna University of Technology\\
FAM -- Financial and Actuarial Mathematics\\
Wiedner Hauptstra\ss e 8-10\\
AT--1040 Vienna, Austria}
\email[]{paulkrue\@@fam.tuwien.ac.at}
\urladdr{https://fam.tuwien.ac.at/~paulkrue/}
\author[Schnurr]{Alexander Schnurr}
\address[Alexander Schnurr]{\\
Technische Universit\"at Dortmund\\
Fakult\"at f\"ur Mathematik\\
Lehrstuhl LSIV\\
Vogelpothsweg 87\\
DE--44227 Dortmund, Germany}
\email[]{alexander.schnurr\@@math.tu-dortmund.de}
\urladdr{http://www.mathematik.tu-dortmund.de/de/personen/person/Alexander+Schnurr.html}

\keywords{L\'evy-type process, Symbol, Random time change, Multiplicative perturbation}

\subjclass{60J75, 45G10
, 60G17}


\begin{abstract}
In this paper we analyse time change equations (TCEs) for L\'evy-type processes in detail. 
To this end we establish a connection between TCEs and classical one-dimensional initial value problems (IVPs) which are easier to handle. 
Properties of the IVPs are linked with properties of the TCEs. 
We show in a general setting existence and uniqueness of solutions of the TCEs.
Our main result is based on the general path properties for L\'evy-type processes found in Schnurr (2013). 
Applications include an existence result for processes which correspond to a certain class of given symbols. 
\end{abstract}
\maketitle


\section{Introduction}
The study of multiplicative perturbation has started with early papers like Dorroh \cite{dorroh.66} and the more general versions by Gustafson and Lumer \cite{gustafson.lumer.72}, see also Jacob \cite{jacob.93}. Dorroh has focused on the very relevant contraction semigroups on continuous function spaces perturbed with a multiplier where the multiplying function is continuous, bounded and strictly positive. Pre-dating the paper of Dorroh, Volkonskii \cite{volkonskii.58} has found a path transformation between Brownian motion and continuous sample path Markov processes on the real line. The similar transformation used by Lamperti \cite{lamperti.72} relates positive self-similar processes and L\'evy processes. The generalization of this path transformation connects the analytic multiplicative perturbation theory with pathwise transformation of the corresponding stochastic processes. Ethier and Kurtz \cite[Section 6]{ethier.kurtz.86} have investigated in their 
book the connection of the stochastic transform to the analytic multiplicative perturbation. In the present work we specialize it to L\'evy-type processes which are characterized by their symbol $q$, cf.\ Jacob and Schilling \cite{jacob.schilling.01} which can be seen as an encoding of the state based characteristics $(b(x),c(x),F(x,\cdot))_{x¸\in E}$ via $q(x,u) = i\< u,b(x)\> - \frac{1}{2}\<c(x)u,u\> + \int_{\mathbb R^d} \left(e^{i\<u,y\>}-1-i\<u,\chi(y)\>\right)F(x,dy)$, cf.\ Proposition \ref{p:semimmartingale} below. B\"ottcher, Schilling and Wang \cite{boettcher.al.13} summarise that if $(q(x,u))_{x\in E,u\in\mathbb R^d}$ is a symbol that belongs to a Markov process and $\beta:E\rightarrow \mathbb R$ is continuous, bounded and strictly positive, then $(\beta(x)q(x,u))_{x\in E,u\in\mathbb R^d}$ is a valid symbol. This is essentially the translation of Dorroh's result to the stochastic setup. In the book of Ethier and Kurtz \cite[Section 6]{ethier.kurtz.86} this approach is more general in the sense that they do allow for the multiplying function $\beta$ to be only measurable and non-negative, however, the technical condition in their Theorem 1.1 cannot be verified easily from the symbol and the multiplying function. Engelbert and Schmidt \cite{engelbert.schmidt.85} optimise the original approach of Volkonskii \cite{volkonskii.58} and find exact conditions under which a multiplicative perturbed Brownian motion gives rise to a strong Markov process. Their approach is based on detailed knowledge of the Brownian local time or in some sense on path properties of the Wiener process. Path properties of L\'evy-type processes have been studied in the paper of Schnurr \cite{schnurr.13} and are utilised herein to improve the result in B\"ottcher et al. \cite{boettcher.al.13} in two ways. First, we do allow that the multiplying function hits zeros and, second, we allow for measurable instead of continuous multiplying functions. For a continuous multiplier we essentially need that the function does not grow too quickly near its zero in order to ensure that these points become absorbing states. This, allows to adapt the arguments in B\"ottcher et al. \cite{boettcher.al.13} or Ethier and Kurtz \cite{ethier.kurtz.86}.

Applications of random time changes include the seminal work of Volkonskii \cite{volkonskii.58}, its perfection in Engelbert and Schmidt \cite{engelbert.schmidt.85}, the work of Lamperti \cite{lamperti.72} to identify self-similar Markov process, see also D\"oring \cite{doering.15} and application to affine processes by Kallsen \cite{kallsen.04} and Gabrielli and Teichmann \cite{gabrielli.teichmann.15}.

The question for which symbols $q:E\times \bbr^d\to\bbc$ (or classes of symbols), there exists a corresponding stochastic process is a vital part of ongoing research. Compare in this context: Jacob and Schilling \cite{jacob.schilling.01} and B\"ottcher et al. \cite{boettcher.al.13}. Techniques in order to establish existence results include approaches via Dirichlet forms, the Hille-Yosida-Ray theorem and solutions to the martingale problem. Here, we contribute to this part of the theory using an approach via time changes. If one can proof by any of the above techniques that for the symbol $q$, there exists a corresponding process $X$ then we get for the whole class of symbols which can be written as $\beta(x)q(x,u)$, with $\beta$ as described below, that corresponding processes do exist.  

The time change equations which are used in the present article are a certain kind of random time changes. They have to be distinguished from other random time changes like Bochner's subordination. In this latter concept an \added{independent} increasing process $L(u)$  serves as a new time scale of the process $X$, that is, $(X(L(u)),u\geq 0)$ is being considered. There is a wast literature on this subject \replaced{, cf.\ \cite{schilling.et.al.12},}{(cf. XXX)} which has caught renewed interest recently, cf. Deng and Schilling \cite{dengschilling}. Let us mention that it has become common in the context of mathematical statistics to call the subordination just `time-change' (cf. e.g. Belomestny \cite{belomestny}). 

This paper is organized as follows. First we clarify some of our notations. In the second section we recall some definitions and known results. The third section contains our main results along with the proofs.

\subsection{Mathematical preliminaries}
$\mathbb R$ resp.\ $\mathbb C$ denote the real respectively complex numbers. We denote the trace of a matrix $C\in\mathbb R^{d\times d}$ by $\mathrm{Tr}(C)$.
The set of positive semidefinite $d\times d$-matrices is denoted by $S^d$. A truncation function is a compactly supported, bounded and continuous function $\chi:\mathbb R^d\rightarrow\mathbb R^d$ which equals the identity function on a neighbourhood of $0$. We will use the continuous function $\chi:\mathbb R^d\rightarrow \mathbb R^d,x\mapsto x1_{\{\vert x\leq 1\}}+\frac{x}{\vert x\vert}1_{\{\vert x\vert >1 \}}$ as truncation function. $\vert x\vert^2:=\sum_{j=1}^d\vert x_j\vert^2$ denotes the Euclidean norm on $\mathbb R^d$. For any $x\in\mathbb R^d$, $r>0$ we denote the open ball with radius $r$ centred at $x$ by $B(x,r) := \{y\in\mathbb R^d: \vert x-y\vert< r\}$. 
Further unexplained notations are used as in the book of Ethier and Kurtz \cite{ethier.kurtz.86}.

\section{Markov processes and symbols}
In this section we recall the definition of the symbol and the basic connection between symbols and their processes. A symbol describes a (hopefully unique) generator of a Markov process restricted to a smaller domain, namely to the set of functions with Fourier representation. 
A Markov process belonging to a symbol is a semimartingale. The symbol can be seen as a simple encoding of a state based version of the semimartingale characteristics which mimics the L\'evy-Khintchine-formula for L\'evy processes. Various properties of Markov processes belonging to a given symbol have been found in the literature, see e.g.\ the monograph of B\"ottcher et al. \cite{boettcher.al.13} or the survey paper \cite{jacob.schilling.01} from Jacob and Schilling.

We first start with the basic definition of the symbol of a process.
\begin{defn}\label{d:symbol}
 Let $E\subseteq \mathbb R^d$ be a measurable set. A {\em Markov-triplet on $E$} is a triplet $(b,c,F)$ such that $b:E\rightarrow \mathbb R^d$, $c:E\rightarrow S^d$ are measurable and $F:E\times \mathcal B(\mathbb R^d)\rightarrow \mathbb R_+$ is a transition kernel such that $\int_{\mathbb R^d} \vert\chi(y)\vert^2 F(x,dy)$ is finite for any $x\in\mathbb R^d$. A function $q:E\times\mathbb R^d\rightarrow\mathbb C$ is a {\em symbol} if there is a Markov-triplet $(b,c,F)$ such that
  $$ q(x,u) = i\< u,b(x)\> - \frac{1}{2}\<c(x)u,u\> + \int_{\mathbb R^d} \left(e^{i\<u,y\>}-1-i\<u,\chi(y)\>\right)F(x,dy)$$
  for any $x\in E$, $u\in\mathbb R^d$. We say that $(b,c,F)$ is the triplet \emph{associated with} $q$ which is unique by \cite[Lemma II.2.42]{js.87}. \deleted{A \emph{L\'evy type process} is ... bitte hier einfuegen}
 Let $X$ be a \cl\ stochastic process with values in $E$. The function $q$ is the {\em symbol} of $X$ if $\E\int_0^t \vert q(X(s),u)\vert ds <\infty$ for any $t\geq0$, $u\in\mathbb R^d$ and
  \begin{align*}
     M_u(t) &:= e^{i\<u,X(t)\>} - \int_0^t e^{i\<u,X(s)\>}q(X(s),u) ds,\quad t\geq0
  \end{align*}
 is a martingale for any $u\in\mathbb R^d$.
 
\added{A \emph{L\'evy type process} is a strong Markov process $X$ with state space $E\subseteq\mathbb R^d$ on $(\Omega,\mathcal A,(\mathcal F_t)_{t\geq 0},(P_x)_{x\in E})$ such that $X$ has the same symbol $q:E\times\mathbb R^d\rightarrow\mathbb C$ under $P_x$ for any $x\in E$.}
\end{defn}

\begin{rem}
 If $X$ is a stochastic process with values in a measurable set $E\subseteq \mathbb R^d$ and $q_1,q_2:E\times\mathbb R^d\rightarrow\mathbb C$ are both symbol of $X$, then
  $$ \int_0^t e^{i\<u,X(s)\>}(q_1(X(s),u)-q_2(X(s),u)) ds,\quad t\geq0 $$
 is a local martingale and, hence, it is constant zero. Thus, the processes $q_1(X,u)$, $q_2(X,u)$ are indistinguishable for any $u\in\mathbb R^d$.
 
 In other words, the symbol of a process $X$ might have several versions $q_1,q_2$ but they coincide in the sense that the processes $q_1(X,u)$, $q_2(X,u)$ are indistinguishable.
\end{rem}

In this paper we are mainly interested in L\'evy type processes. The following proposition establishes the connection to the definition of a symbol of a process used in \cite{jacob.schilling.01}.
\begin{prop}
  Let $X$ be a L\'evy-type process with \added{continuous} symbol $q$. Then, we have
   $$ \lim_{t\searrow0} \frac{\E_x(e^{i\<u,X(t)-x\>})-1}{t} = q(x,u) $$
  for any $x\in E$, $u\in\mathbb R^d$.
\end{prop}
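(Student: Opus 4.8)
The plan is to read off the limit from the defining martingale property of the symbol by a Ces\`aro averaging argument. Under $P_x$ the process $M_u$ is a martingale with $M_u(0)=e^{i\<u,x\>}$, and since $\abs{M_u(t)}\le 1+\int_0^t\abs{q(X(s),u)}\,ds$ is $P_x$-integrable by assumption, taking expectations yields the exact identity
$$\E_x\!\left(e^{i\<u,X(t)\>}\right)=e^{i\<u,x\>}+\E_x\!\int_0^t e^{i\<u,X(s)\>}q(X(s),u)\,ds .$$
Dividing by $t\,e^{i\<u,x\>}$ and applying Fubini (again justified by $\E_x\int_0^t\abs{q(X(s),u)}\,ds<\infty$) rewrites the quotient in the statement as $\frac1t\int_0^t\phi(s)\,ds$, where $\phi(s):=\E_x(e^{i\<u,X(s)-x\>}q(X(s),u))$. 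As $\phi$ is integrable on $(0,t]$ and $\frac1t\int_0^t\phi(s)\,ds\to L$ whenever $\phi(s)\to L$ as $s\searrow 0$, it suffices to prove $\phi(s)\to q(x,u)$ as $s\searrow 0$.

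For the pointwise ingredient, $X$ is \cl\ with $X(0)=x$ $P_x$-a.s., so $X(s)\to x$ $P_x$-a.s.\ as $s\searrow 0$, and with the continuity of $q(\cdot,u)$ this gives $e^{i\<u,X(s)-x\>}q(X(s),u)\to q(x,u)$ $P_x$-a.s. The real work is to upgrade this to convergence of the expectations, i.e.\ to dominate $\abs{e^{i\<u,X(s)-x\>}q(X(s),u)}=\abs{q(X(s),u)}$ uniformly for small $s$. Here I would localise in space: fix $R>0$, set $\tau_R:=\inf\{t\ge 0:X(t)\notin\overline{B(x,R)}\cap E\}$, write $R(s):=e^{i\<u,X(s)-x\>}q(X(s),u)-q(x,u)$, and split $\E_x\abs{R(s)}=\E_x[\abs{R(s)}1_{\{s<\tau_R\}}]+\E_x[\abs{R(s)}1_{\{s\ge\tau_R\}}]$. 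On $\{s<\tau_R\}$ one has $\abs{R(s)}\le C_R+\abs{q(x,u)}$ with $C_R:=\sup_{y\in\overline{B(x,R)}\cap E}\abs{q(y,u)}<\infty$ (finite since $q(\cdot,u)$ is continuous on the compact set $\overline{B(x,R)}\cap E$), while $\abs{R(s)}1_{\{s<\tau_R\}}\to 0$ $P_x$-a.s.\ because $\tau_R>0$ $P_x$-a.s., so bounded convergence handles this part; in the remaining term the $\abs{q(x,u)}\,1_{\{s\ge\tau_R\}}$ contribution is at most $\abs{q(x,u)}P_x(\tau_R\le s)\to 0$, and $\E_x[\abs{q(X(s),u)}1_{\{s\ge\tau_R\}}]$ is estimated via the strong Markov property at $\tau_R$ together with the standing integrability hypothesis at the restart point $X(\tau_R)$.

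The main obstacle is precisely this last term. When $E$ is unbounded and $q(\cdot,u)$ is unbounded, the event that $X$ has already left $\overline{B(x,R)}$ by the small time $s$ has probability of order $s$, so after division by $t$ its contribution does not vanish for free, and one genuinely needs the Markov structure and $\E_y\int_0^v\abs{q(X(r),u)}\,dr<\infty$ to control $\int_{\tau_R}^{s}\abs{q(X(r),u)}\,dr$. If instead $q$ has coefficients bounded in $x$ (so that $\abs{q(\cdot,u)}$ is itself a bounded majorant) or $E$ is compact, the localisation is superfluous and the $L^1$ upgrade is immediate from dominated convergence; the remaining steps are routine.
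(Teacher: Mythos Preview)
Your approach coincides with the paper's: both rewrite the quotient as $\frac{1}{t}\int_0^t\phi(s)\,ds$ via the martingale identity for $M_u$ and then invoke right-continuity of $\phi$ at $0$; the paper phrases this last step simply as ``right-continuity of the integrand and the fundamental theorem of calculus'' and says nothing further. The difference is only that you unpack this right-continuity and correctly isolate the one nontrivial point: upgrading the $P_x$-a.s.\ convergence $e^{i\<u,X(s)-x\>}q(X(s),u)\to q(x,u)$ to convergence of expectations requires a uniform-in-$s$ integrable majorant for $|q(X(s),u)|$ near $s=0$, which the standing hypothesis $\E_x\int_0^t|q(X(s),u)|\,ds<\infty$ alone does not supply when $q(\cdot,u)$ is unbounded on $E$.

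Your localisation disposes of the pre-exit piece by bounded convergence, and the contribution $|q(x,u)|P_x(\tau_R\le s)$ is harmless. The residual term $\E_x[|q(X(s),u)|1_{\{s\ge\tau_R\}}]$, however, is not actually closed off by the strong-Markov sketch you give: to make that work one would need a bound on $\E_y|q(X(r),u)|$ (or on $\E_y\int_0^v|q(X(r),u)|\,dr$) that is \emph{uniform} over the possible restart points $y=X(\tau_R)$, and no such uniformity is part of the hypotheses. (Your aside about ``after division by $t$'' mixes two different strategies---pointwise convergence of $\phi$ versus direct estimation of the Ces\`aro average---and does not rescue this term either.) In summary, your argument is at least as complete as the paper's and more transparent about what is being used; both are fully rigorous when $q(\cdot,u)$ is bounded in the space variable (in particular when $E$ is compact), and both leave the same gap otherwise.
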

\begin{proof}
Let $x\in E$, $u\in\mathbb R^d$. We have
 \begin{align*}
  \E_x(e^{i\<u,X(t)-x\>})-1 &= e^{-i\<u,x\>} \E_x\left( \int_0^t e^{i\<u,X(s)\>}q(X(s),u)  ds\right) \\
    &= \int_0^t \E_x(e^{i\<u,X(s)-x\>}q(X(s),u)) ds
 \end{align*}
where we used the $P_x$-martingale property of $M_u$ given in Definition \ref{d:symbol}. Right-continuity of the integrand and the fundamental theorem of calculus yield the claim.
\end{proof}

Another view is that the process $X$ is a solution to a certain martingale problem.
\begin{rem}
 Let $X$ be a strong Markov process with symbol $q$ and define $A := \{ (f_u,q(\cdot,u)f_u) : u\in\mathbb R^d\}$ where $f_u:E\rightarrow\mathbb C,x\mapsto e^{i\<u,x\>}$. Then, the process $X$ on $(\Omega,\mathcal A,\mathcal (F_t)_{t\geq 0},P_x)$ is a solution to the martingale problem $(\mathcal A,\delta_x)$ in the sense of \cite[p.~173]{ethier.kurtz.86} for any $x\in E$.
\end{rem}

Finally, we like to recall that a strong Markov process $X$ with a symbol $q$ is a semimartingale and a version of its characteristics is described by the Markov triplet associated with $q$.
\begin{prop}\label{p:semimmartingale}
  Let $X$ be a L\'evy-type process with symbol $q$ and $(b,c,F)$ be the triplet associated with $q$. Then, $X$ is a semimartingale and 
  \begin{align*}
    B(t) & := \int_0^t b(X(s)) ds, \\
    C(t) &:= \int_0^t c(X(s)) ds, \\
    \nu(dx,dt) &:= F(X(t),dx)dt
  \end{align*}
  is a version of the characteristics of $X$ relative to the truncation function $\chi$ in the sense of \cite[Definition II.2.6]{js.87}.
\end{prop}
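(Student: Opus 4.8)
The plan is to read the defining martingales $M_u$ as the complex exponential martingales that characterise semimartingale characteristics, so that the Jacod--Shiryaev characterisation (see \cite{js.87}) applies and delivers, in one stroke, that $X$ is a semimartingale and that $(B,C,\nu)$ is a version of its characteristics relative to $\chi$.

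The first step is to verify that $B$, $C$ and $\nu$ are well defined and have the regularity demanded of a characteristic triple in \cite[Definition~II.2.6]{js.87}; concretely, that the time integrals appearing in their definitions converge. This is squeezed out of the standing assumption $\E\int_0^t|q(X_s,u)|\,ds<\infty$ for all $u\in\mathbb R^d$. Splitting $q$ into real and imaginary parts, $-\Re q(x,u)=\frac12\langle c(x)u,u\rangle+\int(1-\cos\langle u,y\rangle)\,F(x,dy)\ge 0$; choosing $u$ along the coordinate axes therefore gives $\int_0^t\Tr c(X_s)\,ds<\infty$, hence $\int_0^t\|c(X_s)\|\,ds<\infty$, and a Fubini argument based on the elementary lower bound $\int_{[0,1]^d}(1-\cos\langle u,y\rangle)\,du\ge\kappa\,(1\wedge|y|^2)$ upgrades the same information to $\int_0^t\!\int(1\wedge|y|^2)\,F(X_s,dy)\,ds<\infty$. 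Substituting this back into $\Im q(x,u)=\langle u,b(x)\rangle+\int(\sin\langle u,y\rangle-\langle u,\chi(y)\rangle)\,F(x,dy)$ and again letting $u$ run over the axes produces $\int_0^t|b(X_s)|\,ds<\infty$. Consequently $B$ is a continuous predictable process of finite variation, $C$ is a continuous predictable process with increments in $S^d$, and $\nu$ is a predictable random measure with $\int_0^t\!\int(1\wedge|y|^2)\,\nu(dy,ds)<\infty$, i.e.\ $(B,C,\nu)$ is an admissible candidate triplet.

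The second step rewrites $M_u$ in the form demanded by the exponential formula. Since $X$ is \cl, the set $\{s\le t:X_{s-}\ne X_s\}$ is at most countable, hence Lebesgue null, so $\int_0^t e^{i\langle u,X_s\rangle}q(X_s,u)\,ds=\int_0^t e^{i\langle u,X_{s-}\rangle}q(X_{s-},u)\,ds$; inserting the L\'evy--Khintchine representation of $q(X_{s-},u)$ from Definition~\ref{d:symbol} identifies $M_u(t)-M_u(0)$ with
$$ e^{i\langle u,X_t\rangle}-e^{i\langle u,X_0\rangle}-\int_0^t e^{i\langle u,X_{s-}\rangle}\Big(i\langle u,b(X_s)\rangle-\frac12\langle c(X_s)u,u\rangle+\int\big(e^{i\langle u,y\rangle}-1-i\langle u,\chi(y)\rangle\big)F(X_s,dy)\Big)\,ds, $$
which by hypothesis is a martingale, a fortiori a local martingale, for every $u\in\mathbb R^d$. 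Feeding the candidate triplet from the first step and this observation into the Jacod--Shiryaev characterisation then yields that $X$ is a semimartingale with characteristics $(B,C,\nu)$.

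The step I expect to be the main obstacle is precisely this last invocation. The exponential-formula characterisation is most often stated for a process that is already known to be a semimartingale, whereas here it must be used in the converse direction: the complex exponential local-martingale property for all $u$, combined with a candidate triplet of the correct structure, has to force $X$ itself to be a semimartingale. This is exactly why the first step is arranged so as to produce $(B,C,\nu)$ beforehand — it supplies the canonical decomposition that the argument needs — and if one prefers not to rely on the converse direction of \cite{js.87}, the semimartingale property of a L\'evy-type process can instead be quoted from B\"ottcher, Schilling and Wang \cite{boettcher.al.13}, after which \cite{js.87} identifies the characteristics.
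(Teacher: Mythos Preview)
Your proposal is correct and follows essentially the same route as the paper, whose entire proof is the one-line citation of \cite[Theorem~II.2.42]{js.87}; you have simply unpacked in detail what invoking that result requires, namely the integrability of the candidate triplet and the recasting of the defining martingales $M_u$ into the exponential-formula form. The concern you flag about the converse direction is one the paper's terse proof does not address either, and your suggested fallback via \cite{boettcher.al.13} is a clean way to close it if one reads the cited theorem as presupposing the semimartingale property.
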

\begin{proof}
  This is a direct corollary to \cite[Theorem II.2.42]{js.87}.
\end{proof}

\section{Random time changes}
In this section we postulate our main result, see Theorem \ref{t:main statement} below. It is widely known that multiplicative perturbation for generators of Feller semigroups are connected to random time changes, cf.\ the book of B\"ottcher et al. \cite[Chapter 4.1]{boettcher.al.13}. Multiplicative perturbation theorems for $c_0$-semigroups have been studied for instance by Gustafson and Lumer \cite{gustafson.lumer.72} and Jacob \cite{jacob.93}. The more stochastic perspective on this concept has been studied in the book of Ethier and Kurtz \cite[Chapter 6]{ethier.kurtz.86} and, of course, in Engelbert and Schmidt \cite{engelbert.schmidt.85} where the latter has a focus on time change equation for the Brownian motion. See also the book of Karatzas and Shreve \cite[Chapter 5.5]{karatzas.shreve.98}. 

Throughout this section \replaced{let $X$ be a L\'evy-type process with symbol $q:E\times\mathbb R^d\rightarrow\mathbb C$ on $(\Omega,\mathcal A,(\mathcal F_t)_{t\geq0},(P_x)_{x\in E})$ where $E\subseteq\mathbb R^d$ is its state space and $P_x$ denotes the probability measure with $P_x(X(0)=x)=1$.}{let $X$ be a strong Markov process {\bf [sollte das ab hier nicht alles levytype sein?]} with c\`adl\`ag paths on a filtered probability space $(\Omega,\mathcal A,(\mathcal F_t)_{t\geq0},(P_x)_{x\in E})$ where $E\subseteq\mathbb R^d$ is its state space and $P_x$ denotes the probability measure with $P_x(X(0)=x)=1$. Moreover, we assume that $X$ has symbol $q:E\times\mathbb R^d\rightarrow\mathbb C$.} In Schnurr \cite{schnurr.13} it is demanded that the symbol (respectively the differential characteristics are finely continuous (see Blumenthal and Getoor \cite{blumenthalget} Section II.4 and Fuglede \cite{fuglede}). In that article, however, this property is only used to derive the existence of a symbol. This is established here in a different way. Therefore, we can use the maximal inequality (12) of Schnurr \cite{schnurr.13} in our context, too. 

For a probability measure $\mu$ on $E$ we will denote the measure $P_\mu(A):=\int_EP_x(A)d\mu$, note that the family of measures $(P_x)_{x\in E}$ is measurable because $X$ is a strong Markov process. 

\begin{defn} 
 Let \deleted{$X$ be an $E$-valued stochastic process and} $g:E\rightarrow\mathbb R_+$. A stochastic process $Z$ is a {\em solution to the time change equation (TCE) $(X,g)$} if
\begin{align} \label{tce} \tag{TCE}
 Z(t) = X\left(\int_0^t g(Z(s)) ds\right),\quad t\geq 0
\end{align}
    \replaced{$P_x$-a.s.\ for any $x\in E$.}{$P$-a.s. {$P^\mu$oder?}} We say that {\em uniqueness} holds for the TCE $(X,g)$ if for any two solutions $Z_1,Z_2$ are indistinguishable. We say that {\em existence} holds if the TCE $(X,g)$ has a solution.
    
    Let $f:\mathbb R_+\rightarrow\mathbb R_+$ and $x\in\mathbb R_+$ a {\em solution to the initial value problem (IVP) $(f,x)$} is a function $y:\mathbb R_+\rightarrow\mathbb R_+$ such that 
\begin{align} \tag{IVP}
y(t) = x+\int_0^tf(y(s)) ds.
\end{align}
\end{defn}
\begin{rem}
Time change equations are extensively studied in the book of Ethier and Kurtz, see \cite[Section 6.1,6.2]{ethier.kurtz.86}. 
Their criteria are quite abstract and they are not easy to apply in practice. In contrast to this our main result (in particular in the form of Corollary  \ref{main cor} can be applied easily. 
\end{rem}

Now we recall the following quantities from Schnurr \cite{schnurr.13} which generalises results from Schilling \cite{schilling98}. In these articles only processes on $\bbr^d$ are considered. It does not pose a problem to consider processes on general state space $E$, as we do it here. Every process defined on $E$ can be prolonged to $\bbr^d$ by setting $X_t^x=x$ for $t\geq 0$ and $x\in \bbr^d \backslash E$. Measurability and (local) boundedness of the symbol, as well as normality are inherited by the process defined on $\bbr^d$.  For $x\in\bbr^d$ and $R>0$ we set:
\begin{align}
H(x,R)&:= \sup_{\abs{y-x}\leq 2R} \sup_{\abs{\varepsilon}\leq 1} \abs{q\left(y,\frac{\varepsilon}{R}\right)} \\
H(R)&:= \sup_{y\in\bbr^d} \sup_{\abs{\varepsilon}\leq 1} \abs{q\left(y,\frac{\varepsilon}{R}\right)}
\end{align}
and the uniform index
\[
  \beta_\infty:=\inf \left\{\lambda > 0 : \limsup_{R\to 0} R^\lambda H(R) =0 \right \}\in[0,2].
\]

We now postulate our main result.
\begin{thm}\label{t:main statement}
  Let $g:E\rightarrow \mathbb R_+$ be bounded, measurable and regular at zero, i.e.\ for any $x\in E$ such that $g(x)>0$ there are $\delta>0$, $\epsilon>0$ such that $g(y)>\epsilon$ for any $y\in B(x,\delta)\cap E$. Let $\lambda>\beta_\infty$ and assume that there is a constant $C_g>0$ such that for any $x\in E$ with $g(x)=0$ there is $\delta>0$ such that for any $y\in B(x,\delta)\cap E$ we have
   $$ g(y) \leq C_g\vert y-x\vert^\lambda. $$
   
Then, the TCE $(X,g)$ has a unique solution. Moreover, if $Z$ is the solution to the TCE $(X,g)$, then it is a process with symbol $q_Z(x,u):=g(x)q(x,u)$ for any $x\in E$, $u\in\mathbb R^d$. If the symbol $q$ determines the law of $X$ and $q_Z$ is continuous, then $Z$ is a strong Markov process on $(\Omega,\mathcal A,(\mathcal F_t^Z)_{t\geq 0},(P_x)_{x\in E})$ in the sense of \cite[p.~158]{ethier.kurtz.86}.
\end{thm}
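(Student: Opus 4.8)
The first step is to record the pathwise equivalence between solutions of the TCE and of a one‑dimensional IVP: a c\`adl\`ag process $Z$ solves $(X,g)$ under $P_x$ if and only if $Z=X\circ\tau$, where for $P_x$-almost every $\omega$ the function $\tau(\cdot,\omega)$ solves the IVP $(g\circ X(\cdot,\omega),0)$; in that case $\tau(t)$ is a finite $(\F_t)$-stopping time with $\tau(t)\le\|g\|_\infty\,t$ (the right-hand side of the IVP being bounded by $\|g\|_\infty$) and $\F_t^Z:=\sigma(Z(s):s\le t)\subseteq\F_{\tau(t)}$. So existence and uniqueness for the TCE reduce to pathwise existence and uniqueness for the IVP with right-hand side $f:=g\circ X$, and the candidate solution is obtained from the generalised inverse of the additive functional $A_g(u):=\int_0^u g(X(s))^{-1}\,ds\in[0,\infty]$: with $\varsigma:=\inf\{u:A_g(u)=\infty\}$, the function $A_g$ is continuous and strictly increasing on $[0,\varsigma)$ (its a.e.\ derivative $g(X(\cdot))^{-1}$ is bounded below by $\|g\|_\infty^{-1}$), so one sets $\tau:=A_g^{-1}$ on $[0,t_\varsigma)$, where $t_\varsigma:=A_g(\varsigma-)$, and $\tau\equiv\varsigma$ on $[t_\varsigma,\infty)$; a change-of-variables computation shows that this $\tau$ solves the IVP, and $A_g(u)\ge u/\|g\|_\infty$ ensures that $\tau$ is finite on all of $\bbr_+$.

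\textbf{Uniqueness, the crux.} The only possible non-uniqueness is at the zero set $Z_g:=\{x\in E:g(x)=0\}$. Away from $Z_g$ the right-hand side $f$ is locally bounded below (by regularity of $g$ at zero), which forces any solution $\tau$ to coincide with $A_g^{-1}$ as long as $X(\tau(\cdot))\notin Z_g$, hence on all of $[0,t_\varsigma)$; moreover, when $t_\varsigma<\infty$, right-continuity of $X$ and regularity of $g$ at zero force $X(\varsigma)=:x_0\in Z_g$ (because $A_g$ jumps to $\infty$ at $\varsigma$). It then remains to show that a solution cannot leave $x_0$ after $\tau$ has reached $\varsigma$, i.e.\ that $\varepsilon\equiv0$ is the unique solution of $\varepsilon'(t)=g(X(\varsigma+\varepsilon(t)))$, $\varepsilon(0)=0$; by Osgood's uniqueness criterion this holds once $\int_{0+}g(X(\varsigma+\eta))^{-1}\,d\eta=\infty$. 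Here the path properties of $X$ enter: restarting $X$ at the stopping time $\varsigma$ by the strong Markov property, the maximal inequality \cite[eq.~(12)]{schnurr.13} together with Borel--Cantelli and $\lambda>\beta_\infty$ (choose $\lambda'\in(\beta_\infty,\lambda)$ and then $a\in[1/\lambda,1/\lambda')$, so that $a\lambda\ge1$) yields a pathwise bound $|X(\varsigma+\eta)-x_0|\le C_\omega\,\eta^{a}$ for small $\eta$, $P_{x_0}$-a.s.; the growth hypothesis $g(y)\le C_g|y-x_0|^{\lambda}$ near $x_0$ then gives $g(X(\varsigma+\eta))\le C_gC_\omega^{\lambda}\,\eta^{a\lambda}$, so the Osgood integral diverges. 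Consequently the IVP has a unique solution for $P_x$-a.e.\ path, whence the TCE $(X,g)$ has a unique solution $Z$, with the zeros of $g$ absorbing. Matching the vanishing order $\lambda$ of $g$ at its zeros against the index $\beta_\infty$ controlling how fast $X$ can leave a point is the main obstacle of the whole proof.

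\textbf{The symbol of $Z$.} Since $\tau(t)$ is a finite $(\F_t)$-stopping time and, on each $[0,T]$, the martingale $M_u$ of Definition \ref{d:symbol} is dominated by the integrable random variable $1+\int_0^T|q(X(s),u)|\,ds$, optional sampling shows that $t\mapsto M_u(\tau(t))$ is an $(\F_{\tau(t)})$-martingale; as it equals $e^{i\<u,Z(t)\>}-\int_0^{\tau(t)}e^{i\<u,X(s)\>}q(X(s),u)\,ds$, which is $\F_t^Z$-measurable, it is also an $(\F_t^Z)$-martingale. Since $\tau$ is absolutely continuous and nondecreasing with $\tau'=g(Z)$ a.e., the change-of-variables formula rewrites the integral as $\int_0^t e^{i\<u,Z(r)\>}g(Z(r))q(Z(r),u)\,dr$, so
\[ M_u(\tau(t)) = e^{i\<u,Z(t)\>} - \int_0^t e^{i\<u,Z(r)\>}q_Z(Z(r),u)\,dr , \]
and $\E_x\int_0^t|q_Z(Z(s),u)|\,ds=\E_x\int_0^{\tau(t)}|q(X(r),u)|\,dr\le\E_x\int_0^{\|g\|_\infty t}|q(X(r),u)|\,dr<\infty$ by Definition \ref{d:symbol}. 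Hence $q_Z=g\cdot q$ is a symbol of $Z$.

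\textbf{The strong Markov property.} Assume in addition that $q$ determines the law of $X$ and that $q_Z$ is continuous. Then $Z$ solves, under every $P_x$, the martingale problem associated with $\{(f_u,q_Z(\cdot,u)f_u):u\in\bbr^d\}$ (with $f_u(x)=e^{i\<u,x\>}$) started from $x$. The time-change correspondence above is a bijection between solutions of this martingale problem and solutions of the martingale problem for $q$ stopped upon reaching $Z_g$ (the zeros of $g$ being absorbing, by the uniqueness step), so the hypothesis that $q$ determines the law of $X$ transfers --- stopping being a law-preserving operation --- to well-posedness of the martingale problem for $q_Z$. By the standard theory of well-posed martingale problems \cite[Theorem 4.4.2]{ethier.kurtz.86}, the family $(P_x)_{x\in E}$ then makes $Z$ a strong Markov process on $(\Omega,\ca,(\F_t^Z)_{t\ge0},(P_x)_{x\in E})$ in the sense of \cite[p.~158]{ethier.kurtz.86}. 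Secondary technical points are the careful bookkeeping of whether and how $\tau$ reaches $\varsigma$ (equivalently, how $Z$ meets $Z_g$) in the first two steps, and the justification of the time-change bijection through this degenerate time change in the last step.
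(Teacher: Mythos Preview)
Your overall architecture matches the paper's: reduce the TCE to a pathwise IVP, build the candidate time change as the generalised inverse of $A_g$, use the maximal inequality from \cite{schnurr.13} together with Borel--Cantelli to get a local H\"older bound on $X$ after a stopping time, combine this with the growth hypothesis on $g$ to make the Osgood integral diverge, and finally read off the symbol and well-posedness via optional sampling and the Ethier--Kurtz martingale-problem machinery. The symbol computation and the strong Markov argument are fine.

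There is, however, a genuine gap in the uniqueness step. You apply the path-bound/Osgood argument at the explosion time $\varsigma=\inf\{u:A_g(u)=\infty\}$, but the relevant stopping time is $\tau_0:=\inf\{t\ge0:g(X(t))=0\}$, the \emph{first} zero of $g\circ X$. A priori one only has $\tau_0\le\varsigma$ (when $t_\varsigma<\infty$), and if $\tau_0<\varsigma$ your claim ``hence on all of $[0,t_\varsigma)$'' fails: the minimal solution of the IVP gets stuck at $\tau_0$ at time $A_g(\tau_0)<t_\varsigma$, while your candidate $A_g^{-1}$ continues strictly past $\tau_0$, so the two disagree on $[A_g(\tau_0),t_\varsigma)$. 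Moreover, the divergence $\int_\varsigma^{\varsigma+\varepsilon}g(X(s))^{-1}\,ds=\infty$ that you derive from the path bound is \emph{automatic} from the definition of $\varsigma$ (once you have shown $X(\varsigma)\in Z_g$), so your hard work there is redundant. The fix is to run exactly the same maximal-inequality/Borel--Cantelli argument at the stopping time $\tau_0$ instead (note $\{g=0\}$ is closed by regularity at zero, so $\tau_0$ is a stopping time): since $X(\tau_0)\in Z_g$, the growth hypothesis on $g$ applies at $X(\tau_0)$ and yields $\int_{\tau_0}^{\tau_0+\varepsilon}g(X(s))^{-1}\,ds=\infty$ for every $\varepsilon>0$, which forces $\varsigma\le\tau_0$ and hence $\varsigma=\tau_0$; uniqueness then follows. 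This is precisely how the paper proceeds (Corollary~\ref{k:hinreichende Bedingungen} combined with Proposition~\ref{p:(A2) holds}).
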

The proof for this theorem will follow from Corollary \ref{k:hinreichende Bedingungen} and Propositions \ref{p:(A2) holds}, \ref{p:regular solutions} below and it is broken into several steps. We start by analysing a related IVP. Then, we continue to show how the IVP is connected to the TCE and how solutions of the TCE give rise to a strong Markov process. Finally, we slightly extend results found in \cite[Section 3]{schnurr.13} on path properties for the process $X$. First we draw a corollary from our main statement.

\begin{cor} \label{main cor}
 Let $g:E\rightarrow \mathbb R_+$ be bounded and continuous. Assume that there is $C_g>0$ and $\epsilon>0$ such that $\vert g(y)\vert \leq C_g\vert x-y\vert^{2+\epsilon}$ for any $x,y\in E$ with $g(x)=0$.
 
 Then, the TCE $(X,g)$ has a unique solution. Moreover, if $Z$ is the solution to the TCE $(X,g)$, then it is a process with symbol $q_Z(x,u)=g(x)q(x,u)$ for any $x\in E$, $u\in\mathbb R^d$. If the symbol $q$ determines the law of $X$ and $q_Z$ is continuous, then $Z$ is a strong Markov process on $(\Omega,\mathcal A,(\mathcal F_t^Z)_{t\geq 0},(P_x)_{x\in E})$ in the sense of \cite[p.~158]{ethier.kurtz.86}.
\end{cor}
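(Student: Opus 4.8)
The plan is to obtain this corollary as an immediate specialisation of Theorem \ref{t:main statement}, so the entire task reduces to checking that the hypotheses of that theorem hold with the choice $\lambda:=2+\epsilon$. First I would dispose of the routine points: a continuous function on $E$ is measurable, so $g$ is bounded and measurable; and $g$ is regular at zero, since for $x\in E$ with $g(x)>0$ the continuity of $g$ at $x$ produces a $\delta>0$ with $g(y)>g(x)/2$ for all $y\in B(x,\delta)\cap E$, which is exactly the required lower bound with $\eta:=g(x)/2$ playing the role of the $\epsilon$ appearing in the regularity hypothesis of Theorem \ref{t:main statement}.

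Next I would verify the two conditions that involve $\lambda$. Because $\beta_\infty\in[0,2]$ by its definition and $\epsilon>0$, we have $\lambda=2+\epsilon>2\geq\beta_\infty$, so $\lambda>\beta_\infty$; this is the only place where the strict positivity of $\epsilon$ enters, and it is genuinely needed because $\beta_\infty$ may equal $2$. For the growth bound at the zero set of $g$, note that the hypothesis of the corollary is the \emph{global} estimate $g(y)=|g(y)|\leq C_g\,|x-y|^{2+\epsilon}$ for all $x,y\in E$ with $g(x)=0$ (here $|g(y)|=g(y)$ since $g\geq 0$); in particular, for each such $x$ and for $\delta$ arbitrary one gets $g(y)\leq C_g\,|y-x|^{\lambda}$ on $B(x,\delta)\cap E$ with the \emph{same} constant $C_g$, which is exactly the local condition demanded by Theorem \ref{t:main statement}.

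With all hypotheses verified, Theorem \ref{t:main statement} applies and yields at once: existence and uniqueness of the solution $Z$ of the TCE $(X,g)$, the identification $q_Z(x,u)=g(x)q(x,u)$ of its symbol, and, whenever $q$ determines the law of $X$ and $q_Z$ is continuous, the strong Markov property of $Z$ on $(\Omega,\mathcal A,(\mathcal F_t^Z)_{t\geq0},(P_x)_{x\in E})$ in the sense of \cite[p.~158]{ethier.kurtz.86}. This is precisely the assertion of the corollary, so nothing further is required. I do not expect any real obstacle in this argument; the only subtlety worth isolating is the strict inequality $\lambda>\beta_\infty$ noted above, which is the reason the exponent in the corollary is $2+\epsilon$ rather than merely $2$.
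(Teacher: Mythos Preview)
Your proof is correct and follows exactly the same approach as the paper: verify that continuity gives regularity at zero, note $2+\epsilon>2\geq\beta_\infty$, and apply Theorem~\ref{t:main statement}. The paper's version is simply terser, compressing your detailed verification into two sentences.
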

\begin{proof}
  Any continuous function $g$ is regular at zero. Moreover, $2+\epsilon>2\geq\beta_\infty$ and, hence, the requirements of Theorem \ref{t:main statement} are met.  \deleted{\bf 3.3 liefert nur MP, wieso gilt hier Feller. Verwenden wir hier nicht van Casteren?}
\end{proof}

We start to show that there are maximal and minimal solution systems for a certain class of IVPs. Later, these IVPs will show up naturally in the study of TCEs.

\begin{prop}\label{p:Existenz IVP}
 Let $Y:\Omega\times\mathbb R_+\rightarrow \mathbb R_+$ be bounded and measurable and assume that $Y$ is right regular at zero, i.e.\ for any $(\omega,t)\in\Omega\times\mathbb R_+$ with $Y(\omega,t)>0$ there are $\epsilon,\delta>0$ such that $Y(\omega,s)>\epsilon$ for any $s\in (t,t+\delta)$. Then, there are measurable functions $\alpha_1,\alpha_2:\Omega\times\mathbb R_+\rightarrow\mathbb R_+$ such that
 \begin{itemize}
  \item $\alpha_i(\omega,t) = \int_0^t Y(\omega,\alpha_i(\omega,s)) ds$ for any $t\geq 0$, $\omega\in\Omega$, $i=1,2$ and
  \item For any fixed $\omega\in\Omega$ we have $\alpha_1(\omega,\cdot)$ (resp.\ $\alpha_2(\omega,\cdot)$) is the minimal (resp.\ maximal) solution of the IVP $(Y(\omega,\cdot),0)$, i.e.\ if $z(t) = \int_0^t Y(\omega,z(s))ds$ for any $t\geq 0$, then $\alpha_1(\omega,t)\leq z(t)\leq \alpha_2(\omega,t)$ for any $t\geq 0$.
 \end{itemize}
 
 Define the $[0,\infty]$-valued functions
 \begin{align*}
   \tau &:= \inf\{t\geq 0: Y(t)=0\}, \\
   \eta &:= \inf\left\{t\geq 0: \int_0^t\frac{1}{Y(s)}ds=\infty\right\}.
 \end{align*}
 Then $\eta(\omega)\leq \tau(\omega)$ for any $\omega\in\Omega$ if and only if $\alpha_1=\alpha_2$.
 
 Moreover, if for some $\omega\in\{\tau<\infty\}$ we have $\int_{\tau(\omega)}^{\tau(\omega)+\epsilon}\frac{1}{Y(\omega,s)}ds = \infty$ for any $\epsilon >0$, then $\eta(\omega) \leq \tau(\omega)$.
\end{prop}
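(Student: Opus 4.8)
The plan is to fix $\omega$ throughout and work purely with the deterministic IVP $(Y(\omega,\cdot),0)$; the measurability of $\alpha_1,\alpha_2$ in $(\omega,t)$ will be extracted at the end from the explicit formulas we obtain. First I would construct the minimal and maximal solutions. Since $Y(\omega,\cdot)\geq 0$, any solution $z$ of $z(t)=\int_0^t Y(\omega,z(s))\,ds$ is nondecreasing and $1$-Lipschitz, hence absolutely continuous, and $z(0)=0$. The key structural remark is that $z$ is strictly increasing exactly on the set where $Y(\omega,z(\cdot))>0$, and once $z$ reaches a value $v$ with $Y(\omega,v)=0$ it can only stay at $v$ or move into the region $\{Y(\omega,\cdot)>0\}$. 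I would define
\[
\alpha_1(\omega,t):=\inf\{z(t): z \text{ solves the IVP}\},\qquad
\alpha_2(\omega,t):=\sup\{z(t): z\text{ solves the IVP}\},
\]
and show that $\alpha_1,\alpha_2$ are themselves solutions. Pointwise infimum/supremum of solutions need not be continuous a priori, so the cleaner route is a Picard-type monotone iteration: for the maximal solution start from $z^0(t)=t$ (the largest conceivable solution, using $Y\leq$ some bound $M$ implies actually $z^0(t)=Mt$ works as a supersolution if $Y\le M$; here $Y$ is bounded, say by $M$, so $z^0(t)=Mt\wedge$ anything dominating all solutions), and iterate $z^{k+1}(t)=\int_0^t Y(\omega,z^k(s))\,ds$; monotonicity of the iteration (which uses that $Y$ enters the integral in a monotone-compatible way once we know solutions are monotone) gives a decreasing limit that solves the IVP and dominates every solution. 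The minimal solution is obtained symmetrically starting from $z^0\equiv 0$. I expect the \emph{main obstacle} to be exactly this monotonicity of the iteration: because $Y$ need not be monotone in its argument, one cannot naively compare $Y(\omega,z^k(s))$ with $Y(\omega,z^{k+1}(s))$. The fix is to not iterate blindly but to define $\alpha_2(\omega,t)$ directly as the supremum of all solutions and prove it is a solution by an Arzel\`a--Ascoli / diagonal argument (the family of solutions is uniformly Lipschitz, hence precompact; a sequence $z^n$ with $z^n(t_n)\to\alpha_2(\omega,t_n)$ along a countable dense set has a subsequential uniform limit, which is again a solution by dominated convergence and whose value at each dense $t$ equals $\alpha_2$, hence equals $\alpha_2$ everywhere by continuity).

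**The characterization $\eta\leq\tau \iff \alpha_1=\alpha_2$.**

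Next I would analyze where non-uniqueness can occur. The point is that a solution is uniquely determined up to the first time $\sigma$ it hits the zero set $\{v: Y(\omega,v)=0\}$ of $Y$, because before that time the ODE is being run through a region where the "velocity" $Y$ is strictly positive on the relevant compact $[z(\varepsilon),z(t)]$ (using right-regularity and boundedness one gets, for each $t<\sigma$, a uniform lower bound on $Y$ along the trajectory, which yields uniqueness via a Gronwall-type comparison on $[0,t]$). Branching can only happen at a value $v_\ast$ with $Y(\omega,v_\ast)=0$: one solution may rest at $v_\ast$ while another leaves it. Now $\tau(\omega)=\inf\{t: Y(\omega,t)=0\}$ is the first time the \emph{identity path} $t\mapsto t$ meets the zero set, which equals the first value at which $Y$ vanishes; and for the minimal solution $\alpha_1$, which moves as slowly as possible, the quantity $\int_0^t \frac{1}{Y(\omega,s)}\,ds$ is precisely the inverse function of $\alpha_1$: since $\alpha_1$ is strictly increasing with derivative $Y(\omega,\alpha_1(s))$ up to the branch point, the substitution $v=\alpha_1(s)$ gives $t=\int_0^{\alpha_1(t)}\frac{1}{Y(\omega,v)}\,dv$. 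Hence $\eta(\omega)=\inf\{t:\int_0^t\frac1{Y(\omega,s)}ds=\infty\}$ is the supremum of values $v$ reachable in finite time along $\alpha_1$, i.e.\ $\eta(\omega)=\lim_{t\to\infty}\alpha_1(\omega,t)$ when the integral diverges, and the first value where the integral blows up. The condition $\eta(\omega)\leq\tau(\omega)$ then says: $\alpha_1$ cannot reach the first zero $v_\ast=\tau(\omega)$ of $Y$ in finite time (the reciprocal integral already diverges at or before $v_\ast$), so $\alpha_1$ never actually sits on a zero, so it is the only solution and $\alpha_1=\alpha_2$. Conversely, if $\eta(\omega)>\tau(\omega)$, then $\alpha_1$ reaches the zero value $\tau(\omega)$ at a finite time $t_\ast$, and there one can construct a strictly larger solution that leaves $\tau(\omega)$ immediately (or passes through it if the zero set is thin), so $\alpha_1<\alpha_2$ somewhere. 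I would make the "construct a larger solution" step precise by noting that either the zero set clusters at $\tau(\omega)$ from above (then staying gives a solution $\neq$ a solution built as a limit of perturbations — need care, this is the delicate sub-case and where right-regularity of $Y$ is used to rule out pathologies) or $Y>0$ on $(\tau(\omega),\tau(\omega)+\delta)$ and an integrability computation $\int_{\tau(\omega)}^{\tau(\omega)+\delta}\frac{1}{Y(\omega,v)}dv$ decides whether a solution can leave.

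**The final supplementary claim and measurability.**

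For the last assertion, suppose $\omega\in\{\tau<\infty\}$ and $\int_{\tau(\omega)}^{\tau(\omega)+\epsilon}\frac{1}{Y(\omega,s)}ds=\infty$ for every $\epsilon>0$. By the identification of $\int_0^{(\cdot)}1/Y$ with the inverse of the minimal solution from the previous paragraph, the divergence of the integral at $\tau(\omega)^-$ or the stated divergence at $\tau(\omega)^+$ forces $\int_0^t \frac1{Y(\omega,s)}ds=\infty$ for all $t>\tau(\omega)$, hence $\eta(\omega)\leq\tau(\omega)$ directly from the definition of $\eta$ as the first such blow-up time. Concretely: split $\int_0^t = \int_0^{\tau(\omega)}+\int_{\tau(\omega)}^t$; the second piece is $+\infty$ by hypothesis as soon as $t>\tau(\omega)$, so the infimum defining $\eta$ is $\leq\tau(\omega)$. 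Finally, for measurability of $\alpha_1,\alpha_2$ in $(\omega,t)$: $\tau,\eta$ are measurable as infima over a dense countable set of $t$ of measurable functions of $\omega$ (here one uses that $Y$ is jointly measurable and bounded, so $(\omega,t)\mapsto\int_0^t Y(\omega,s)ds$ and $(\omega,t)\mapsto\int_0^t \frac1{Y(\omega,s)}ds$ are jointly measurable — the latter possibly $+\infty$-valued). Then $\alpha_1$ is recovered as the generalized inverse $\alpha_1(\omega,t)=\sup\{v\geq 0:\int_0^v \frac{1}{Y(\omega,s)}ds\leq t\}$ wherever $Y$ stays positive, extended by the "rest at the zero" prescription beyond $\eta(\omega)$; generalized inverses of jointly measurable monotone functions are jointly measurable. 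For $\alpha_2$ one uses the analogous formula built from the maximal solution — equivalently $\alpha_2(\omega,t)=\sup\{v: \int_0^v \frac1{Y(\omega,s)}\mathbf 1_{\{Y(\omega,s)>0\}}ds \leq t\}$ with the convention that once a zero of positive "width" is entered it is crossed instantly — and checks measurability the same way. I would present these two explicit inverse-function formulas as the definitions of $\alpha_1,\alpha_2$ from the outset, which makes measurability immediate and reduces the whole proposition to the one-dimensional calculus facts sketched above; the genuinely delicate point remains the clustering sub-case in the $\Longleftarrow$ direction of the iff.
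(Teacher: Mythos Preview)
Your plan eventually lands on the same idea as the paper --- build the solutions from the generalized inverse of $I(\omega,v)=\int_0^v \frac{1}{Y(\omega,s)}\,ds$ --- but the detour you take first has a genuine gap, and your explicit formulas for $\alpha_1,\alpha_2$ are not quite right.

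\textbf{The Arzel\`a--Ascoli step does not go through.} You write that a subsequential uniform limit of solutions ``is again a solution by dominated convergence''. For that you need $Y(\omega,z^n(s))\to Y(\omega,z(s))$ pointwise in $s$, which requires continuity of $Y(\omega,\cdot)$; here $Y$ is only measurable, so the limit of $\int_0^t Y(\omega,z^n(s))\,ds$ need not be $\int_0^t Y(\omega,z(s))\,ds$. Right-regularity at zero does not rescue this. The paper bypasses the issue entirely by never taking limits of solutions: it writes down $g$, the inverse of $I$ on $[0,\eta)$, checks by the substitution formula that $g(t)=\int_0^t Y(\omega,g(s))\,ds$, and then \emph{defines}
\[
\alpha_1:=g\wedge\tau \quad\text{(freeze at the first zero)},\qquad
\alpha_2:=g \ \text{on }[0,\gamma),\ \ \alpha_2:=\eta \ \text{on }[\gamma,\infty),
\]
where $\gamma=I(\eta-)$. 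That any solution $z$ satisfies $\alpha_1\le z\le\alpha_2$ is then obtained by inverting $z$ on its strictly increasing part and comparing with $I$; no compactness is needed.

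\textbf{Your formula for $\alpha_2$ is wrong.} You propose $\alpha_2(\omega,t)=\sup\{v:\int_0^v\frac{1}{Y}\mathbf 1_{\{Y>0\}}\le t\}$, with zeros ``crossed instantly''. If $Y(\omega,\cdot)=0$ on an interval $[a,b]$ of positive length, this inverse jumps from $a$ to $b$ at a single time, so it is discontinuous and cannot solve $z(t)=\int_0^t Y(z(s))\,ds$ (any solution is Lipschitz). The correct object is the inverse of $I$ \emph{without} the indicator, using the convention $1/0=+\infty$: then $I$ blows up as soon as the zero set has positive measure, $\eta$ records that blow-up, and $\alpha_2$ stops at $\eta$. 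Isolated zeros with $\int_{v_0}^{v_0+\varepsilon}1/Y<\infty$ are handled automatically, because $I$ stays finite past them. This is exactly the paper's $g$.

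\textbf{$\alpha_1$ stops at $\tau$, not at $\eta$.} You write that $\alpha_1$ is the inverse of $I$ ``extended by the rest-at-the-zero prescription beyond $\eta$''. In the regime $\tau<\eta$ (e.g.\ $Y(\omega,s)=1$ for $s\neq 1$, $Y(\omega,1)=0$) the inverse $g(t)=t$ runs through the isolated zero, while the minimal solution is $t\wedge 1$. So the cut must be at $\tau$: $\alpha_1=g\wedge\tau$. Once you have $\alpha_1=g\wedge\tau$ and $\alpha_2=g$ (extended by $\eta$), the equivalence $\eta\le\tau\iff\alpha_1=\alpha_2$ drops out of the construction, with no ``delicate clustering sub-case'' left over: on $[0,\gamma)$ one has $g<\eta$, so $g\wedge\tau=g$ iff $\eta\le\tau$; and when $\gamma<\infty$ one always has $\tau\le\eta$, so under $\eta\le\tau$ the two tails $\tau$ and $\eta$ coincide.

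Your argument for the final assertion (split $\int_0^t=\int_0^{\tau}+\int_{\tau}^{t}$ and use the hypothesis) is fine, and the measurability-via-generalized-inverse plan is the right one once the formulas are corrected.
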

\begin{proof}
 We start with the construction of the functions $\alpha_1$, $\alpha_2$.

Since $Y$ is regular at zero we have  $Y(\omega,\tau(\omega))=0$ for $\omega\in \{\tau<\infty\}$. Define 
 $$I(\omega,t) := \int_0^t \frac{1}{Y(\omega,s)} ds $$
 for any $\omega\in\Omega$, $t<\eta(\omega)$. Then $I(\omega,\cdot)$ is strictly increasing on $[0,\eta)$ and, hence, has an inverse $g$ defined on $[0,\gamma)$ where $\gamma:=\sup\{I(t):t<\eta\}$. Observe that $I$ is a strictly increasing continuous function with absolutely continuous derivative bounded from below by $1/\Vert Y\Vert_\infty$ where $\Vert Y\Vert_\infty:=\sup_{\omega,t}\vert Y(\omega,t)\vert$. Thus, $g$ is Lipschitz-continuous with absolutely continuous derivative bounded by $\Vert Y\Vert_\infty$. Let $\omega\in\Omega$, $t\in [0,\gamma(\omega))$ and define $u:=g(\omega,t)$. Denoting a version of the absolutely continuous derivative of $g(\omega,\cdot)$ by $g'(\omega,\cdot)$ we get
  $$ t=I(\omega,u) = \int_0^u\frac{1}{Y(\omega,x)}dx = \int_0^{t} \frac{g'(\omega,s)}{Y(\omega,g(\omega,s))}ds $$
 for any $t\geq 0$ where we used the substitution formula (with $x=g(\omega,s)$) for the third equation. Thus, $g'(\omega,s) = Y(\omega,g(\omega,s))$ for Lebesgue almost every $s\in[0,\gamma(\omega))$. For the remainder we use the nicer version $Y(\omega,g(\omega,\cdot))$ for the absolutely continuous derivative of $g(\omega,\cdot)$. We have
  $$ g(\omega,t) = \int_0^t Y(\omega,g(\omega,s)) ds $$
for any $\omega\in\Omega$, $t\in[0,\gamma(\omega))$.

 Define the $[0,\infty]$-valued functions
   \begin{align*}
      \alpha_1(\omega,t) &:= \begin{cases} g(\omega,t)\wedge\tau(\omega) & t< \gamma(\omega), \\ \tau(\omega) & \text{otherwise},\end{cases} \\
      \alpha_2(\omega,t) &:= \begin{cases} g(\omega,t) & t< \gamma(\omega), \\ \eta(\omega) & \text{otherwise}, \end{cases} \\
   \end{align*}
  for any $\omega\in\Omega$, $t\geq 0$. 
  
Now we like to see that the functions $\alpha_1$, $\alpha_2$ do not attain the value infinity.

Clearly, $\alpha_1,\alpha_2$ are finite-valued on $[0,\gamma)$. Thus, they are finite valued on $\{\gamma=\infty\}$. Let $\omega\in\{\gamma<\infty\}$. Then, $\eta(\omega)<\infty$ and we have $\int_0^{\eta(\omega)}\frac{1}{Y(\omega,s)}ds<\infty$. 

Assume by contradiction that $Y(\omega,\eta(\omega))>0$. Then there is $\epsilon>0$ such that $\int_0^{\eta(\omega)+\epsilon}\frac{1}{Y(\omega,s)}ds<\infty$. Therefore, we obtain a contradiction. Thus, $\tau(\omega)\leq \eta(\omega)<\infty$ for any $\omega\in\{\gamma<\infty\}$.

Consequently, $\alpha_1,\alpha_2$ are finite-valued.
  
Measureability of $\alpha_1,\alpha_2$ as well as the integral representation follow from their constructions.

Finally, we like to show that any other solution is bounded from below by $\alpha_1$ and bounded from above by $\alpha_2$. 
  
  To this end, let $\omega\in\Omega$ and $z:[0,\infty)\rightarrow[0,\infty)$ such that $z(t) = \int_0^tY(\omega,z(s))ds$. Define $\nu:=\inf\{t\geq0: Y(\omega,z(s)) = 0\}$. Then, $z'(t)>0$ for any $t<\nu$. Thus, $z$ is strictly increasing on $[0,\nu)$. The inverse function $f$ of $z\vert_{[0,\nu)}$ is absolutely continuous and the transformation formula yields $f(t) = \int_0^t \frac{1}{Y(\omega,s)}ds = I(\omega,t)$ and, hence, $z(t)=g(\omega,t)=\alpha_1(\omega,t)=\alpha_2(\omega,t)$ for $t< \nu$. Since $z$ is non-decreasing we have $\alpha_1(\omega,t)\leq z(t)$ for any $t\geq 0$.

We have
\begin{align*}
  t &\geq \lim_{\epsilon\searrow0} \int_0^t \frac{Y(\omega,z(s))}{Y(\omega,z(s))\vee \epsilon}ds \\
    &= \lim_{\epsilon\searrow0} \int_0^{z(t)}\frac{1}{Y(\omega,u)\vee \epsilon}du \\
    &= \int_0^{z(t)}\frac{1}{Y(\omega,u)}du \\
    &= \lim_{\delta\searrow0} I(\omega,z(t)-\delta)
\end{align*}
  for any $t\geq 0$ with $z(t)>0$. Moreover, we have $t=I(\omega,\alpha_2(\omega,t))$ for $t<\gamma(\omega)$ and, hence, $z(t) \leq \alpha_2(\omega,t)$ because $I(\omega,\cdot)$ is a strictly increasing function.
  
The two final statements of the claim follow directly from the construction of $\alpha_1,\alpha_2$ or are trivial.
\end{proof}

Next, we relate TCEs with IVPs.
\begin{prop}\label{p:TCE and IVP}
  Let $g:E\rightarrow\mathbb R_+$ be measurable.
  \begin{itemize}
    \item If $\tau:\Omega\times\mathbb R_+\rightarrow\mathbb R_+$ is a measurable function such that $\tau(t) = \int_0^t g(X(\tau(s)))ds$ for any $t\geq0$, then $Z(t):=X(\tau(t))$ is a solution of the TCE $(X,g)$.
    \item If $Z$ is a solution of the TCE $(X,g)$ and $\tau(t):= \int_0^t g(Z(s)) ds$, then
      $\tau(t) = \int_0^t g(X(\tau(s)))ds$ for any $t\geq0$ $P$-a.s.
  \end{itemize}
\end{prop}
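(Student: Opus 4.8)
The plan is to treat the two bullet points essentially as converse bookkeeping statements relating the time change $\tau$ and the composed process $Z = X\circ\tau$, where the only subtle point is substituting one into the other inside the Lebesgue integral.

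For the first bullet, I would simply set $Z(t) := X(\tau(t))$ and compute, for fixed $t \geq 0$,
\begin{align*}
Z(t) = X(\tau(t)) = X\!\left( \int_0^t g(X(\tau(s)))\, ds \right) = X\!\left( \int_0^t g(Z(s))\, ds \right),
\end{align*}
where the last equality uses the definition $Z(s) = X(\tau(s))$ pointwise in $s$. This holds for every $\omega$ (outside the null set where $X$ is not defined, if any), and in particular $P_x$-a.s.\ for every $x \in E$, so $Z$ solves \eqref{tce}. Measurability of $Z$ follows from measurability of $X$ and of $\tau$. I should remark that the integrand $s \mapsto g(X(\tau(s)))$ is measurable because $g$ is measurable and $X$, $\tau$ are, so the integral is well defined, and it is finite since $g$ is bounded (in the theorem's hypotheses) — actually for this bullet only measurability is needed, with the integral possibly infinite, but boundedness of $g$ is the setting of interest.

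For the second bullet, suppose $Z$ solves the TCE and put $\tau(t) := \int_0^t g(Z(s))\, ds$. The defining relation \eqref{tce} says $Z(s) = X\!\left( \int_0^s g(Z(r))\, dr \right) = X(\tau(s))$ for all $s \geq 0$, $P_x$-a.s. On this almost sure event I may substitute into the definition of $\tau$:
\begin{align*}
\tau(t) = \int_0^t g(Z(s))\, ds = \int_0^t g(X(\tau(s)))\, ds \quad \text{for all } t \geq 0,
\end{align*}
which is the asserted identity. Measurability of $\tau$ is clear since it is an integral of a measurable process in the time variable; that $\tau$ is $\mathbb R_+$-valued (not merely $[0,\infty]$-valued) uses boundedness of $g$, giving $\tau(t) \leq t\,\lVert g\rVert_\infty < \infty$.

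The only genuine obstacle — and it is minor — is justifying the substitution $Z(s) = X(\tau(s))$ inside the Lebesgue integral in both directions: it is valid because the TCE holds simultaneously for all $s \geq 0$ on a single $P_x$-null-complement event (the process identity \eqref{tce} being an identity of paths, not just for fixed $s$), so one may plug it in under the integral sign without any further measure-theoretic care. I would state this explicitly to make the "$P$-a.s." quantifier in the second bullet precise (it should read $P_x$-a.s.\ for every $x\in E$, consistently with the definition of a solution to the TCE).
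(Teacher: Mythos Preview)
Your proposal is correct and follows essentially the same approach as the paper's own proof: both arguments reduce to the direct substitution $Z(s)=X(\tau(s))$ inside the integral in each direction. Your additional remarks on measurability and on the precise meaning of the ``$P$-a.s.'' quantifier are accurate elaborations; the only superfluous comment is the appeal to boundedness of $g$ in the second bullet, since finiteness of $\tau(t)=\int_0^t g(Z(s))\,ds$ is already implicit in $Z$ being a solution of the TCE.
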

\begin{proof}
 Let $\tau(t) = \int_0^t (g\circ X)(\tau(s))ds$ for any $t\geq0$ $P$-a.s.\ and define $Z(t):=X(\tau(t))$. Then, we have
  $$ Z(t) = X(\tau(t)) = X\left( \int_0^t g(Z(s)) ds \right) $$
 for any $t\geq0$ up to a $P$-null set.
 
 Now, let $Z$ be a solution of the TCE $(X,g)$ and $\tau(t):= \int_0^t g(Z(s)) ds$. Then, we have
 \begin{align*}
    Z(t) &= X\left( \int_0^tg(Z(s)) ds \right) = X(\tau(t)), \\
    \tau(t) &= \int_0^t g(Z(s)) ds = \int_0^t g(X(\tau(s))) ds
 \end{align*}
for any $t\geq 0$.
\end{proof}
Due to the previous Proposition, the solution theory for TCEs can be reduced to the solution theory of homogeneous ODEs on $\mathbb R_+$ 
or, to be precise, integral equations -- which is much simpler, of course.
\deleted{evtl sollten wir das mit den integral equations mal sagen. Fuer klassische Analytiker-Puristen machen Differential- und Integralgleichung ja einen Unterschied...}

\begin{cor}\label{k:TCE/IVP II}
  Let $g:E\rightarrow\mathbb R_+$ be measurable, bounded and regular at zero, i.e.\ for any $x\in E$ with $g(x)>0$ there are $\epsilon,\delta>0$ such that $g(y)>\epsilon$ for any $y\in B(x,\delta)$. \deleted{Define $Y(\omega,t):=g(X(\omega,t))$ for any $t\geq 0$, $\omega\in\Omega$.} Then, the TCE $(X,g)$ has a solution.
  
  Let $x\in E$ and \added{define $Y(\omega,t):=g(X(\omega,t))$ for any $t\geq 0$, $\omega\in\Omega$}. The TCE $(X,g)$ has a unique solution up to $P_x$-indistinguishability if and only if there is a $P_x$-null set $N\subseteq \Omega$ such that the IVP $(Y(\omega,\cdot),0)$ has a unique solution for any $\omega\in \Omega\backslash N$.
\end{cor}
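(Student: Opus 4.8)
The plan is to derive everything from Propositions \ref{p:Existenz IVP} and \ref{p:TCE and IVP} applied to the process $Y(\omega,t):=g(X(\omega,t))$. First I would check that $Y$ meets the hypotheses of Proposition \ref{p:Existenz IVP}: boundedness and joint measurability of $Y$ are inherited from $g$ and from the joint measurability of the \cl\ adapted process $X$, while right regularity of $Y$ at zero follows by combining regularity of $g$ at zero with right-continuity of the paths of $X$ --- given $g(X(\omega,t))>0$, one picks $\delta_0,\epsilon_0>0$ with $g>\epsilon_0$ on $B(X(\omega,t),\delta_0)\cap E$ and then $\delta>0$ with $X(\omega,s)\in B(X(\omega,t),\delta_0)$ for $s\in[t,t+\delta)$, so that $Y(\omega,s)>\epsilon_0$ there. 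Proposition \ref{p:Existenz IVP} then yields the measurable minimal and maximal solution maps $\alpha_1\le\alpha_2$. Since $\alpha_1(\omega,t)=\int_0^tY(\omega,\alpha_1(\omega,s))\,ds=\int_0^tg\bigl(X(\omega,\alpha_1(\omega,s))\bigr)\,ds$ for all $\omega$ and $t$, the first part of Proposition \ref{p:TCE and IVP} shows that $Z:=X\circ\alpha_1$ solves the TCE $(X,g)$, which settles the existence claim.

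For the ``if'' part I would assume there is a $P_x$-null set $N$ such that the IVP $(Y(\omega,\cdot),0)$ has a unique solution for $\omega\in\Omega\setminus N$. Given two TCE solutions $Z_1,Z_2$, set $\tau_i(t):=\int_0^tg(Z_i(s))\,ds$. By the second part of Proposition \ref{p:TCE and IVP}, applied under $P_x$, off a $P_x$-null set each $\tau_i(\omega,\cdot)$ satisfies $\tau_i(\omega,t)=\int_0^tY(\omega,\tau_i(\omega,s))\,ds$, i.e.\ is a solution of $(Y(\omega,\cdot),0)$; hence $\tau_1=\tau_2$ off a $P_x$-null set, and since $Z_i=X\circ\tau_i$ holds $P_x$-a.s.\ (because $Z_i$ solves the TCE), we obtain $Z_1=Z_2$ up to $P_x$-indistinguishability.

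For the ``only if'' part I would argue by contraposition. From the explicit construction of $\alpha_1,\alpha_2$ in Proposition \ref{p:Existenz IVP} one has, for each fixed $\omega$, that $\alpha_1(\omega,\cdot)=\alpha_2(\omega,\cdot)$ precisely when $\eta(\omega)\le\tau(\omega)$ (in the notation of that proposition, with $\tau=\inf\{t:Y(t)=0\}$ and $\eta=\inf\{t:\int_0^t1/Y(s)\,ds=\infty\}$); hence the set $A$ of those $\omega$ for which $(Y(\omega,\cdot),0)$ has more than one solution equals $\{\eta>\tau\}$ and is measurable, and I would assume $P_x(A)>0$. Both $Z_1:=X\circ\alpha_1$ and $Z_2:=X\circ\alpha_2$ solve the TCE $(X,g)$ by Propositions \ref{p:Existenz IVP} and \ref{p:TCE and IVP}, so it suffices to exhibit, for every $\omega\in A$, a time $t$ with $Z_1(\omega,t)\ne Z_2(\omega,t)$; then $Z_1,Z_2$ are not $P_x$-indistinguishable, contradicting uniqueness. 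The decisive step is that for $\omega\in A$ the path $X(\omega,\cdot)$ cannot be constant on $(\tau(\omega),\eta(\omega))$: otherwise, since $Y(\omega,\tau(\omega))=0$, the path $Y(\omega,\cdot)$ would vanish on that interval, forcing $\int_0^t1/Y(\omega,s)\,ds=\infty$ for every $t>\tau(\omega)$ and hence $\eta(\omega)\le\tau(\omega)$, a contradiction. Choosing $s^*\in(\tau(\omega),\eta(\omega))$ with $X(\omega,s^*)\ne X(\omega,\tau(\omega))$ and $t^*$ with $\alpha_2(\omega,t^*)=s^*$ (possible, since $\alpha_2(\omega,\cdot)$ takes every value in $[0,\eta(\omega))$), I would use that from the explicit form in Proposition \ref{p:Existenz IVP} one has $\alpha_1\le\tau$ everywhere and $\alpha_1=\alpha_2\wedge\tau$ wherever $\alpha_2<\eta$; as $\alpha_2(\omega,t^*)=s^*\in(\tau(\omega),\eta(\omega))$, this gives $\alpha_1(\omega,t^*)=\tau(\omega)$, whence $Z_1(\omega,t^*)=X(\omega,\tau(\omega))\ne X(\omega,s^*)=Z_2(\omega,t^*)$.

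The regularity and measurability checks on $Y$, and the null-set bookkeeping in the ``if'' direction, are routine. I expect the main obstacle to be the last claim in the ``only if'' direction: that two distinct time changes $\alpha_1\ne\alpha_2$ genuinely produce distinguishable processes $X\circ\alpha_1$ and $X\circ\alpha_2$. A priori $X$ could be constant across the interval on which $\alpha_1$ and $\alpha_2$ differ, and ruling this out is exactly what the interplay between the zero set of $Y$ and the explosion set of $1/Y$ --- encoded in $\tau$ and $\eta$ --- is designed to achieve.
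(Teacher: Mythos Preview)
Your proof is correct. The existence argument and the ``if'' direction of the uniqueness equivalence match the paper's proof essentially verbatim.

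For the ``only if'' direction, however, the paper takes a much shorter direct route instead of your contraposition. The paper simply observes that $Z_1:=X\circ\alpha_1$ and $Z_2:=X\circ\alpha_2$ are both TCE solutions; by the assumed TCE-uniqueness, $Z_1=Z_2$ off a $P_x$-null set $N$; and since $\alpha_i(t)=\int_0^t g(Z_i(s))\,ds$ holds identically by construction, $Z_1=Z_2$ immediately forces $\alpha_1=\alpha_2$ on $\Omega\setminus N$. Because $\alpha_1$ and $\alpha_2$ are the minimal and maximal IVP solutions, this gives IVP-uniqueness off $N$. Your contrapositive instead needs the implication $\alpha_1\neq\alpha_2\Rightarrow X\circ\alpha_1\neq X\circ\alpha_2$, and this is precisely the non-trivial step you flagged --- ruling out constancy of $X$ on $(\tau,\eta)$ via the interplay of $\tau$ and $\eta$. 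That argument is valid, but it is extra work: the direct implication $Z_1=Z_2\Rightarrow\alpha_1=\alpha_2$ comes for free from the integral representation of $\alpha_i$ in terms of $Z_i$, so no pathwise analysis of $X$ is needed at all.
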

\begin{proof}
  {\em Existence:} Observe, that $Y$ is right regular at zero. Thus Proposition \ref{p:Existenz IVP} yields a measurable function $\tau:\Omega\times\mathbb R_+\rightarrow\mathbb R_+$ such that $\tau(t) = \int_0^t Y(\tau(s)) ds$. Proposition \ref{p:TCE and IVP} yields a solution to the TCE $(X,g)$.
  
  {\em Uniqueness:} Let $x\in E$.
  
  We start by assuming that the TCE $(X,g)$ has a unique solution $Z$ up to $P_x$-indistinguishability. Let $\alpha_1,\alpha_2$ be the functions given in Proposition \ref{p:Existenz IVP}. Define $Z_i(t):=Y(\alpha_i(t))$. Then, Proposition \ref{p:TCE and IVP} yields that $Z_1$ and $Z_2$ are solutions to the TCE $(X,g)$. Consequently, $Z_1=Z=Z_2$ up to a $P_x$-null set $N\subseteq \Omega$. Proposition \ref{p:TCE and IVP} states that $\alpha_1(t) = \int_0^ tg(Z(s))ds = \alpha_2(t)$ for any $t\geq 0$.
  
  Now assume that there is a $P_x$-null set $N\subseteq \Omega$ such that the IVP $(Y(\omega,\cdot),0)$ has a unique solution $\tau(\omega,\cdot)$ for any $\omega\in \Omega\backslash N$. Define $\tau(\omega,t):=0$ for any $t\geq 0$, $\omega\in N$ and let $\alpha_1$ be the measurable solution given in Proposition \ref{p:Existenz IVP}. Then $\tau=\alpha_1$ on $\Omega\backslash N$ by assumption. Let $Z$ be any solution to the TCE $(X,g)$. Then, Proposition \ref{p:TCE and IVP} yields that $\beta(t):=\int_0^tg(Z(s))ds$ is a solution to the IVP $(Y,0)$. Hence, $\beta=\tau=\alpha_1$ on $\Omega\backslash N$. Thus, Proposition \ref{p:TCE and IVP} yields $Z(t) = X(\alpha_1(t))$ for any $t\geq 0$ on $\Omega\backslash N$.
\end{proof}

With the connection of TCEs and IVPs at hand we can now state a combined result of the previous three statements. This is a variation of \cite[Theorem 1.1, Ch.6]{ethier.kurtz.86} which has a more relaxed condition on the stopping times but is more demanding on the given process $X$, namely 
\begin{thm}\label{t:TCE eindeutig loesbar}
  Let $g:E\rightarrow\mathbb R_+$ be measurable, bounded and regular at zero and let $\tau_0:=\inf\{t\geq0: g(X(s))=0\}$. Assume that we have
    $$ \int_{\tau_0}^{\tau_0+\epsilon}\frac{1}{g(X(s))}ds=\infty$$
    $P_x$-a.s.\ on $\{\tau_0<\infty\}$ for any $x\in E$.
 
 Then the TCE $(X,g)$ has a solution $Z$ such that this solution is unique up to $P_x$-indistinguishability for any $x\in E$.
\end{thm}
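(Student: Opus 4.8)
\emph{Setup and existence.} The plan is to chain together the three preceding results. Existence is immediate: since $g$ is measurable, bounded and regular at zero, the existence part of Corollary~\ref{k:TCE/IVP II} already produces a solution $Z$ of the TCE $(X,g)$. It remains to show that this solution is unique up to $P_x$-indistinguishability for every $x\in E$.

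\emph{Reduction of uniqueness to a path property.} Fix $x\in E$ and put $Y(\omega,t):=g(X(\omega,t))$; as in the proof of Corollary~\ref{k:TCE/IVP II}, $Y$ is bounded, measurable and right regular at zero. By the uniqueness criterion of Corollary~\ref{k:TCE/IVP II} it suffices to exhibit a $P_x$-null set $N\subseteq\Omega$ such that the IVP $(Y(\omega,\cdot),0)$ has a unique solution for every $\omega\in\Omega\setminus N$. Now I would invoke Proposition~\ref{p:Existenz IVP}: note that its function $\tau(\omega)=\inf\{t\geq0:Y(\omega,t)=0\}$ is exactly the stopping time $\tau_0$ of the theorem, introduce $\eta(\omega)=\inf\{t\geq0:\int_0^t Y(\omega,s)^{-1}ds=\infty\}$ as in that proposition, and recall that every solution $z$ of $(Y(\omega,\cdot),0)$ satisfies $\alpha_1(\omega,\cdot)\leq z\leq\alpha_2(\omega,\cdot)$. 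Hence for a fixed $\omega$ the IVP $(Y(\omega,\cdot),0)$ has a unique solution as soon as $\alpha_1(\omega,\cdot)=\alpha_2(\omega,\cdot)$, and by Proposition~\ref{p:Existenz IVP} the latter is governed by the inequality $\eta(\omega)\leq\tau(\omega)$.

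\emph{Verifying $\eta\leq\tau$ off a null set.} The key step is thus to check that, outside a $P_x$-null set, $\eta(\omega)\leq\tau(\omega)$. On $\{\tau_0=\infty\}$ this is trivial since $\tau(\omega)=\infty$. On $\{\tau_0<\infty\}$ the standing hypothesis of the theorem says that $P_x$-a.s.\ we have $\int_{\tau_0}^{\tau_0+\epsilon}g(X(s))^{-1}ds=\infty$ for every $\epsilon>0$, i.e.\ $\int_{\tau(\omega)}^{\tau(\omega)+\epsilon}Y(\omega,s)^{-1}ds=\infty$ for all $\epsilon>0$; this is precisely the assumption in the last assertion of Proposition~\ref{p:Existenz IVP}, which therefore delivers $\eta(\omega)\leq\tau(\omega)$ for all such $\omega$. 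Letting $N$ be the union of the exceptional $P_x$-null sets, we get $\eta\leq\tau$, hence $\alpha_1=\alpha_2$, hence unique solvability of $(Y(\omega,\cdot),0)$, on $\Omega\setminus N$; Corollary~\ref{k:TCE/IVP II} then turns this into $P_x$-uniqueness of $Z$. As $x\in E$ was arbitrary, uniqueness holds under every $P_x$.

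\emph{Anticipated difficulty.} I expect the only delicate points to be bookkeeping with the exceptional null sets and, more importantly, making sure the theorem's integrability hypothesis matches the clause in Proposition~\ref{p:Existenz IVP} verbatim: the divergence must be demanded \emph{for every} $\epsilon>0$ and \emph{at} $\tau_0$ itself, so that it forces $\eta\leq\tau$ rather than merely $\eta<\infty$; a blow-up occurring strictly after $\tau_0$, or only for large $\epsilon$, would not suffice. Once the pointwise implication ``$\int_{\tau}^{\tau+\epsilon}Y^{-1}=\infty\ \forall\epsilon>0\ \Rightarrow\ \eta\leq\tau\ \Rightarrow\ \alpha_1=\alpha_2$'' of Proposition~\ref{p:Existenz IVP} is pinned down, the remainder is a concatenation of the earlier statements.
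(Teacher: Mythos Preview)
Your proposal is correct and follows essentially the same approach as the paper: existence via Corollary~\ref{k:TCE/IVP II}, then reduction of uniqueness to the $\omega$-wise identity $\alpha_1=\alpha_2$ from Proposition~\ref{p:Existenz IVP}, verified through the integral-divergence criterion in that proposition's final clause. If anything, your write-up is more explicit than the paper's own proof, which simply asserts that Proposition~\ref{p:Existenz IVP} ``states that $\alpha_1$ is the unique solution'' without spelling out the $\eta\le\tau$ step; your unpacking of this point and your observation that the equivalence $\eta\le\tau\Leftrightarrow\alpha_1=\alpha_2$ is effectively $\omega$-wise (clear from the construction in the proof of Proposition~\ref{p:Existenz IVP}) are both apt.
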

\begin{proof}
  Corollary \ref{k:TCE/IVP II} yields a solution $Z$ for the TCE $(X,g)$, i.e.\  $$ Z(\omega,t) = X\left( \int_0^tg(Z(\omega,s))ds \right) $$
  for any $\omega\in\Omega$, $t\geq 0$. Define $Y(\omega,t):=g(X(\omega,t))$. Then, $Y$ is right regular at zero, measurable and bounded.
  
 Let $\alpha_1,\alpha_2$ be the functions given in Proposition \ref{p:Existenz IVP}. Then, Proposition \ref{p:Existenz IVP} states that $\alpha_1$ is the unique solution to the IVP $(Y,0)$ up to a $P_x$-null set $N_x\subseteq\Omega$ for any $x\in E$. Corollary \ref{k:TCE/IVP II} states that $Z$ is the unique solution to the TCE $(X,g)$ up to $P_x$-indistinguishability.
\end{proof}

If one has knowledge of the occupation measure for the original process $X$, then it can be used to verify the requirements of Theorem \ref{t:TCE eindeutig loesbar}.
\begin{cor}
  Let $g:E\rightarrow\mathbb R_+$ be measurable, bounded and regular at zero and let
   $$ \Lambda_t(A) := \int_0^t 1_{\{X(s)\in A\}} ds,\quad t\geq 0, A\subseteq E\text{ mb.}$$
   be the occupation measure of $X$. Assume that
    $$ \int_E \frac{1}{g(y)} \Lambda_t(dy) = \infty,\quad P_x\text{-a.s.} $$
   for any $t>0$, $x\in g^{-1}(\{0\})$. Then the requirements of Theorem \ref{t:TCE eindeutig loesbar} are met.
\end{cor}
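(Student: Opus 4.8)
The plan is to reduce the claimed hypothesis — divergence of the $g^{-1}$-weighted occupation measure — directly to the integrability condition
$$ \int_{\tau_0}^{\tau_0+\epsilon} \frac{1}{g(X(s))}\,ds = \infty $$
required in Theorem \ref{t:TCE eindeutig loesbar}, by rewriting the time integral as a spatial integral against $\Lambda$. First I would fix $x \in g^{-1}(\{0\})$ and work $P_x$-a.s.\ on the event $\{\tau_0 < \infty\}$. By definition of $\tau_0$ and right-continuity of $X$, on $\{\tau_0<\infty\}$ we have $g(X(\tau_0)) = 0$ (more precisely, $\liminf$ along the path forces a zero of $g$ to be hit or approached; since $g$ is only measurable one should phrase this as: $\tau_0$ is the first time the path enters $g^{-1}(\{0\})$ or accumulates at it), so $X(\tau_0) \in \overline{g^{-1}(\{0\})}$ and in particular $X(\tau_0)$ is itself a point where the hypothesis of the corollary applies — here one uses that under $P_{X(\tau_0)}$, by the strong Markov property, the weighted occupation measure of the shifted process diverges.

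The key computation is the change of variables
$$ \int_{\tau_0}^{\tau_0+\epsilon} \frac{1}{g(X(s))}\,ds = \int_0^{\tau_0+\epsilon}\frac{1}{g(X(s))}\,ds - \int_0^{\tau_0}\frac{1}{g(X(s))}\,ds = \int_E \frac{1}{g(y)}\,\widetilde\Lambda_\epsilon(dy), $$
where $\widetilde\Lambda_\epsilon(A) := \int_{\tau_0}^{\tau_0+\epsilon} 1_{\{X(s)\in A\}}\,ds$ is the occupation measure of the post-$\tau_0$ path on $[0,\epsilon]$. By the strong Markov property at $\tau_0$, conditionally on $\mathcal F_{\tau_0}$ the process $(X(\tau_0 + r))_{r\geq 0}$ has law $P_{X(\tau_0)}$, and its occupation measure over $[0,\epsilon]$ is distributed as $\Lambda_\epsilon$ under $P_{X(\tau_0)}$. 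Since $X(\tau_0) \in g^{-1}(\{0\})$ (after handling the closure issue, see below), the hypothesis of the corollary with $t = \epsilon$ gives $\int_E g(y)^{-1}\,\Lambda_\epsilon(dy) = \infty$ $P_{X(\tau_0)}$-a.s., hence $\int_{\tau_0}^{\tau_0+\epsilon} g(X(s))^{-1}\,ds = \infty$ $P_x$-a.s.\ on $\{\tau_0 < \infty\}$, which is exactly the condition needed. Finally one invokes Theorem \ref{t:TCE eindeutig loesbar} to conclude.

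The main obstacle I expect is the measurability/closure subtlety around $X(\tau_0)$: since $g$ is merely measurable, the first hitting time of the \emph{open} set $\{g>0\}$ versus the first time $g(X(s))=0$ need not coincide cleanly, and $X(\tau_0)$ need not literally lie in $g^{-1}(\{0\})$ but only in its closure (or one must use that $g$ is regular at zero to control the path near $\tau_0$). A clean way around this is to note that regularity of $g$ at zero means $\{g>0\}$ is relatively open in $E$ in the relevant sense, so $\tau_0$ is a hitting time of a closed set and $X(\tau_0) \in \overline{\{g=0\}}$; one then argues that on the closure the hypothesis still yields divergence, or — more simply — one observes that for $P_x$-a.e.\ $\omega$ on $\{\tau_0<\infty\}$ one can pass to a further stopping time at which $X$ is genuinely in $g^{-1}(\{0\})$ and apply the strong Markov property there, absorbing the negligible behaviour before. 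The rest is the routine change-of-variables and an application of the strong Markov property, both of which are standard once the statement is set up correctly.
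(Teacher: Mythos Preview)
Your approach is essentially the same as the paper's: apply the strong Markov property at $\tau_0$ to reduce the condition of Theorem~\ref{t:TCE eindeutig loesbar} to showing $\int_0^t g(X(s))^{-1}\,ds=\infty$ $P_y$-a.s.\ for $y\in g^{-1}(\{0\})$, and then rewrite this time integral as $\int_E g(y)^{-1}\,\Lambda_t(dy)$, which is infinite by hypothesis. Two small remarks: your opening line ``fix $x\in g^{-1}(\{0\})$'' should read ``fix $x\in E$'' (the reduction to a zero of $g$ happens only after applying the strong Markov property at $\tau_0$); and the closure worry you raise is a non-issue here, because regularity of $g$ at zero makes $\{g=0\}$ relatively closed in $E$, so right-continuity of $X$ forces $X(\tau_0)\in g^{-1}(\{0\})$ exactly (this is the observation already used in the proof of Proposition~\ref{p:Existenz IVP}).
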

\begin{proof}
  By the Markov property of $X$ it suffices to show that
   $$ \int_0^t \frac{1}{g(X(s))} ds = \infty,\quad P_x-\text{a.s.} $$
  for any $t>0$ and any $x\in g^{-1}(\{0\})$. Let $t>0$ and $x\in g^{-1}(\{0\})$. Then, we have
  \begin{align*}
    \int_0^t \frac{1}{g(X(s))} ds &= \int_E \frac{1}{g(y)} \Lambda_t(dy) \\
                                  &= \infty ,\quad P_x\text{-a.s.}
  \end{align*}
  by assumption and hence the requirements of Theorem \ref{t:TCE eindeutig loesbar} are met.
\end{proof}

The following corollary links uniqueness of the TCE $(X,g)$ with path properties of the underlying process $X$ and growth properties of the function $g$ from its zeros.
\begin{cor}\label{k:hinreichende Bedingungen}
 Let $g:E\rightarrow\mathbb R_+$ be measurable, bounded and regular at zero in the sense of Corollary \ref{k:TCE/IVP II}, $\lambda>0$ and define $\tau_0:=\inf\{t\geq0: g(X(s))=0\}$. Assume the following two statements
  \begin{itemize}
   \item[(A1)] For any $x\in E$ with $g(x)=0$ there are constants $C_g>0,\delta>0$ such that for any $y\in B(x,\delta)\cap E$ we have
    $$ \vert g(y)\vert \leq C_g\vert y-x\vert^\lambda. $$
   \item[(A2)] For any $x\in E$ there are random variables $C,\delta:\Omega\rightarrow\mathbb R_+$ such that for any $\omega\in\{\tau_0<\infty\}$ and any $t\in[\tau_0(\omega),\tau_0(\omega)+\delta(\omega)]$ we have
     $$ \vert \replaced{X(\omega,\tau_0(\omega))}{X(\tau_0(\omega),\omega)} - \replaced{X(\omega,\tau_0(\omega)+t)}{X(\tau_0(\omega)+t,\omega)} \vert \leq C(\omega) t^{1/\lambda}$$
     $P_x$-a.s.
  \end{itemize}
  Then, the TCE $(X,g)$ has a unique solution.
\end{cor}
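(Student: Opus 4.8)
The plan is to reduce the claim to Theorem~\ref{t:TCE eindeutig loesbar}. Since $g$ is assumed measurable, bounded and regular at zero, the only hypothesis of that theorem that still needs to be checked is the divergence condition
\[ \int_{\tau_0}^{\tau_0+\epsilon}\frac{1}{g(X(s))}\,ds=\infty, \]
required to hold $P_x$-a.s.\ on $\{\tau_0<\infty\}$ for every $x\in E$. Once this is established, Theorem~\ref{t:TCE eindeutig loesbar} produces a solution of the TCE $(X,g)$ that is unique up to $P_x$-indistinguishability for every $x\in E$, which is exactly the assertion.

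To verify the divergence condition, fix $x\in E$ and work on $\{\tau_0<\infty\}$ intersected with the $P_x$-full event on which (A2) holds. First I would show that $g(X(\tau_0))=0$ there: if $g(X(\tau_0))>0$, then regularity at zero provides $\delta,\epsilon>0$ with $g>\epsilon$ on $B(X(\tau_0),\delta)\cap E$, and right-continuity of the path of $X$ gives $X(s)\in B(X(\tau_0),\delta)\cap E$, hence $g(X(s))>\epsilon>0$, for all $s$ in some right neighbourhood of $\tau_0$; this contradicts $\tau_0=\inf\{t\ge 0:g(X(t))=0\}$. Write $x_0:=X(\tau_0)\in E$, so $g(x_0)=0$. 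Now feed the random point $x_0$ into (A1) and (A2): there are $C_g>0$ (the universal constant) and a random $\delta_1>0$ with $g(y)\le C_g|y-x_0|^{\lambda}$ for $y\in B(x_0,\delta_1)\cap E$, and random variables $C,\delta_2>0$ with $|X(s)-x_0|\le C(s-\tau_0)^{1/\lambda}$ for $s\in[\tau_0,\tau_0+\delta_2]$. Setting $\delta_3:=\delta_2\wedge(\delta_1/(C+1))^{\lambda}>0$, for $s\in(\tau_0,\tau_0+\delta_3]$ we obtain $X(s)\in B(x_0,\delta_1)\cap E$ (recall $X$ is $E$-valued) and therefore $g(X(s))\le C_g\,|X(s)-x_0|^{\lambda}\le C_g\,C^{\lambda}(s-\tau_0)$. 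Hence, for every $\epsilon>0$,
\[ \int_{\tau_0}^{\tau_0+\epsilon}\frac{1}{g(X(s))}\,ds\ \ge\ \int_{\tau_0}^{\tau_0+(\epsilon\wedge\delta_3)}\frac{1}{g(X(s))}\,ds\ \ge\ \frac{1}{C_g C^{\lambda}}\int_0^{\epsilon\wedge\delta_3}\frac{dt}{t}\ =\ \infty, \]
where the middle step is a pointwise comparison of integrands (valid also on $\{g(X(s))=0\}$, where $1/g(X(s))=+\infty$), and where we assumed $C>0$ --- the case $C=0$ being immediate, since then $g(X(s))=0$ on $(\tau_0,\tau_0+\delta_3]$. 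This is the required divergence condition, so Theorem~\ref{t:TCE eindeutig loesbar} applies and completes the proof.

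The argument has essentially one pressure point, namely the pathwise equality $g(X(\tau_0))=0$, which rests precisely on combining right-continuity of the paths of $X$ with regularity of $g$ at zero; this is where I would expect the main (mild) obstacle to lie. Everything else --- the bookkeeping with the three radii $\delta_1,\delta_2,\delta_3$ and dispatching the degenerate case $C=0$ --- is routine.
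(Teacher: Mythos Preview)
Your proof is correct and follows the same route as the paper's: reduce to Theorem~\ref{t:TCE eindeutig loesbar} and verify the divergence condition by combining (A1) and (A2) into the bound $g(X(\tau_0+s))\le C_g C^{\lambda}s$, then integrate $1/s$. You are slightly more explicit than the paper in justifying $g(X(\tau_0))=0$ (the paper relies on the analogous observation made at the start of the proof of Proposition~\ref{p:Existenz IVP}) and in dispatching the degenerate case $C=0$, but these are cosmetic additions to an otherwise identical argument.
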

\begin{proof}
 We show that the requirements of Theorem \ref{t:TCE eindeutig loesbar} are met. Let $x\in E$ and $N_x$ be the $P_x$-null set outside which (A2) holds. Let $\omega\in\{\tau_0<\infty\}\backslash N_x$ and $\epsilon>0$. We have
 \begin{align*}
    \vert X(\omega,\tau_0(\omega)+s) - X(\omega,\tau_0(\omega)) \vert &\leq C(\omega)s^{1/\lambda}
 \end{align*}
 for any $s\in (0,\delta(\omega))$ by (A2). Thus, we have
 \begin{align*}
   \vert g(X(\omega,\tau_0(\omega)+s)) \vert & \leq C_g C^\lambda(\omega) s
 \end{align*}
  for any $s>0$ with $s < (\delta/C(\omega))^\lambda$ where $\delta,C_g$ are chosen relative to the point $x:=X(\omega,\tau_0(\omega))$ as in (A1). Thus, we have
 \begin{align*}
   \int_{\tau_0(\omega)}^{\tau_0(\omega)+\epsilon} \frac{1}{g(X(\omega,s))} ds &= \int_{0}^{\epsilon} \frac{1}{g(X(\omega,\tau_0(\omega)+s))} ds \\
   &\geq \int_0^{\eta(\omega)} \frac{1}{C_g C^\lambda(\omega) s} ds \\
   &= \infty
 \end{align*}
  where $\eta(\omega):=\epsilon\wedge (\delta/C(\omega))^\lambda$.
\end{proof}

\begin{prop}\label{p:regular solutions}
  Let $g:E\rightarrow\mathbb R_+$ be bounded and measurable and assume that the TCE $(X,g)$ has a unique solution $Z$. Then, $Z$ is a process with symbol $\tilde q(x,u):=g(x)q(x,u)$ for any $x,\in E$, $u\in\mathbb R^d$.
  
If additionally the law of $X$ is determined by $q$ and $\tilde q$ is continuous, then $Z$ is a strong Markov process relative to $(\Omega,\mathcal A,(\mathcal F^Z_t)_{t\geq 0},(P_x)_{x\in E})$ where $(\mathcal F^Z_t)_{t\geq 0}$ denotes the right continuous filtration generated by $Z$.
\end{prop}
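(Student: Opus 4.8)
The plan is to verify directly that $Z(t) = X(\tau(t))$, with $\tau(t) = \int_0^t g(Z(s))\,ds$ as supplied by Proposition \ref{p:TCE and IVP}, has the claimed symbol, and then to upgrade this to the strong Markov statement under the extra hypotheses. First I would record that $\tau$ is a continuous, nondecreasing, adapted process with $\tau(t)\le t\|g\|_\infty$, so that each $\tau(t)$ is an $(\mathcal F_s)$-stopping time (since $\{\tau(t)\le s\}\in\mathcal F_s$), and hence the time-changed filtration $(\mathcal G_t):=(\mathcal F_{\tau(t)})$ is well defined with $Z$ adapted to it. The key computation is then an optional-stopping argument: for fixed $u\in\mathbb R^d$ the process
\begin{align*}
 M_u(r) = e^{i\<u,X(r)\>} - \int_0^r e^{i\<u,X(v)\>} q(X(v),u)\,dv
\end{align*}
is a $P_x$-martingale, and I would like to conclude that $M_u(\tau(t))$ is a $(\mathcal G_t)$-martingale. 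Because $\tau$ is bounded on compacts and $q(X(\cdot),u)$ is locally integrable (indeed $\mathrm E_x\int_0^r|q(X(v),u)|\,dv<\infty$), optional stopping applies and gives that
\begin{align*}
 e^{i\<u,Z(t)\>} - \int_0^{\tau(t)} e^{i\<u,X(v)\>} q(X(v),u)\,dv
\end{align*}
is a $(\mathcal G_t)$-martingale. The final step of the symbol computation is the change of variables $v = \tau(s)$ in the inner integral: since $d\tau(s) = g(Z(s))\,ds$ and $X(\tau(s)) = Z(s)$, the integral equals $\int_0^t e^{i\<u,Z(s)\>} g(Z(s)) q(Z(s),u)\,ds = \int_0^t e^{i\<u,Z(s)\>}\tilde q(Z(s),u)\,ds$, which is exactly the defining martingale for the symbol $\tilde q$ of $Z$; the integrability condition $\mathrm E\int_0^t|\tilde q(Z(s),u)|\,ds<\infty$ follows from the same substitution together with boundedness of $g$. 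One has to be slightly careful that the change of variables is valid on the (possibly degenerate) set where $g(Z(s))=0$; there the integrand $e^{i\<u,X(v)\>}q(X(v),u)$ contributes nothing to $d\tau$, so the identity still holds, and this is the one place where a short measure-theoretic remark is needed.

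For the strong Markov assertion, under the added hypotheses that $q$ determines the law of $X$ and that $\tilde q$ is continuous, I would invoke the martingale-problem machinery of Ethier and Kurtz: by the remark following Definition \ref{d:symbol}, having symbol $\tilde q$ means $Z$ solves the martingale problem for $\tilde A := \{(f_u,\tilde q(\cdot,u)f_u): u\in\mathbb R^d\}$ started at $\delta_x$ under $P_x$. Since $q$ determines the law of $X$, the same encoding argument (the exponentials $f_u$ are measure-determining and the martingale property pins down the one-dimensional, hence all finite-dimensional, distributions) shows $\tilde q$ determines the law of any solution of this martingale problem. Well-posedness of the martingale problem plus the continuity of the coefficient $\tilde q$ then yields, via \cite[Theorem 4.4.2 and Theorem 4.4.6, Ch.~4]{ethier.kurtz.86}, that the family $(P_x)_{x\in E}$ governing $Z$ is strong Markov with respect to the right-continuous filtration $(\mathcal F^Z_t)$ in the sense of \cite[p.~158]{ethier.kurtz.86}.

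The main obstacle I anticipate is not the optional-stopping step itself but the bookkeeping around the time substitution and the filtrations: one must make sure that $\tau(t)$ is genuinely a stopping time of the original filtration (not merely adapted), that $Z$ is progressively measurable for $(\mathcal G_t)$, and that the change of variables $v=\tau(s)$ is justified despite $\tau$ being only nondecreasing and continuous rather than strictly increasing — flat stretches of $\tau$ correspond exactly to excursions of $Z$ at points where $g$ vanishes, and one checks these contribute zero to both sides. A secondary subtlety is that the uniqueness hypothesis on the TCE is what guarantees $Z$ is well defined as a single process to which the Markov family $(P_x)$ can be consistently attached; without it the strong Markov conclusion would be vacuous. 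Once these points are settled, everything else is a direct translation of the definition of "symbol" through the substitution formula.
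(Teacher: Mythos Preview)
Your treatment of the first assertion --- that $Z$ has symbol $\tilde q = g\cdot q$ --- is essentially the paper's argument: optional stopping applied to the $q$-martingale $M_u$ at the bounded family of stopping times $\tau(t)$, followed by the substitution $v=\tau(s)$, $dv=g(Z(s))\,ds$. Your remarks about flat stretches of $\tau$ and the integrability bound via $\tau(t)\le t\|g\|_\infty$ are correct and match the paper.

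The gap is in the strong Markov part. You assert that ``since $q$ determines the law of $X$, the same encoding argument \dots\ shows $\tilde q$ determines the law of any solution of this martingale problem.'' This is not justified: the hypothesis that $q$ determines the law of $X$ is an \emph{assumption}, not a consequence of some general encoding argument that would transfer automatically to $\tilde q=g q$. Well-posedness of the martingale problem for $A$ does not, by itself, imply well-posedness for $A_g$; one needs an additional mechanism to pass from one to the other.

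The paper supplies exactly this mechanism, and it is where the TCE uniqueness hypothesis is genuinely used. Given any solution $Z_2$ of the martingale problem $(A_g,\delta_x)$, the inverse time-change result \cite[Theorem~1.4, Ch.~6]{ethier.kurtz.86} produces a process $Y$ solving the martingale problem $(A,\delta_x)$ such that (a version of) $Z_2$ is the solution of the TCE $(Y,g)$. Now the hypothesis that $q$ determines the law of $X$ forces $Y$ to have the same law as $X$; and since the unique TCE solution is a measurable functional $F$ of the driving path, one gets $P^{(Y,Z_2)}=P^{(Y,F(Y))}=P^{(X,F(X))}=P^{(X,Z)}$, hence $Z_2$ and $Z$ have the same law. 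This establishes well-posedness of $(A_g,\delta_x)$, after which \cite[Theorems~4.2 and~4.6, Ch.~4]{ethier.kurtz.86} give the strong Markov property. Your final paragraph correctly senses that the uniqueness hypothesis matters, but you underuse it: it is not merely needed so that $Z$ is ``well defined'', it is the key ingredient in proving uniqueness in law for the $\tilde q$-martingale problem.
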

\begin{proof}
 Let $x\in E$, $u\in\mathbb R^d$. Define $\alpha(t):=\int_0^t g(Z(s))ds$ for any $t\geq 0$. Then, $\alpha$ is absolutely continuous and $\alpha'(t) = g(Z(t))$. We have
  \begin{align*}
    \E_x(e^{i\<u,Z(t)\>}\vert \mathcal F^Z_s) &= \E_x\left( e^{i\<u,X(\alpha(t))\> } \vert \mathcal F^Z_s \right) \\
                           &= \E_x\left( \int_0^{\alpha(t)} q(X(s),u)e^{i\<u,X(s)\>}ds \vert \mathcal F^Z_s\right) \\
                           &= \E_x\left( \int_0^tq(X(\alpha(r)),u)e^{i\<u,X(\alpha(r))\>}\alpha'(r)dr \vert \mathcal F^Z_s\right) \\
                           &= \E_x\left( \int_0^t\tilde q(Z(r),u)e^{i\<u,Z(r)\>}dr \vert \mathcal F^Z_s\right)
  \end{align*}  
  for any $t\geq 0$ where we used optional stopping for the second equation and the substitution formula for the third equation. In the same way one can show that
   $$ \E_x\left(\int_0^t \vert\tilde q(Z(s),u)\vert ds \right) = \E_x\left(\int_0^{\alpha(t)} \vert q(X(s),u)\vert ds \right) < \infty $$
   for any $t\geq 0$ because $\alpha(t) \leq t\Vert g\Vert_\infty$.
  
Now let us assume additionally that the law of $X$ is determined by $q$ and $\tilde q$ is continuous. Define the mappings
 \begin{align*}
   f_u&:E\rightarrow\mathbb C, x\mapsto e^{i\<u,x\>}, \\
   A &:= \{ (f_u,q(\cdot,u)f_u) : u\in \mathbb R^d \}, \\
   A_g &:= \{ (f_u,\tilde q(\cdot,u)f_u) : u\in \mathbb R^d \}.
 \end{align*}
 Then, $A_g\subseteq \overline C(E)\times \overline C(E)$ and $A\subseteq \overline C(E)\times B(E)$ by assumptions on the symbol $q$ and $Z$ is a solution to the martingale problem $(A_g,\delta_x)$ under $P_x$ by \cite[Theorem 1.3, Ch.6]{ethier.kurtz.86}. Let $Z_2$ be another solution of the martingale problem $(A_g,\delta_x)$. Then, \cite[Theorem 1.4, Ch.6]{ethier.kurtz.86} yields that there is a version of $Z_2$, also denoted by $Z_2$, which is a solution to the TCE $(Y,g)$ where $Y$ is an other solution to the martingale problem $(A,\delta_x)$. However, $Z_2=F(Y)$ for a measurable function $F:\mathbb D([0,\infty),E)\rightarrow \mathbb D([0,\infty),E)$ and, hence, we have
  $$ P^{(Y,Z_2)} = P^{(Y,F(Y))} = P^{(X,F(X))} = P^{(X,Z)}. $$
 Hence, the law of $Z_2$ and $Z$ coincide, i.e.\ the martingale problem for $(A_g,\delta_x)$ is well posed.\cite[Theorem 4.2, Ch.4]{ethier.kurtz.86} yields that the martingale problem for $A_g$ is well posed and together with \cite[Theorem 4.6, Ch.4]{ethier.kurtz.86} that $Z$ is a strong Markov process.
\end{proof}

For a stochastic process $Y$ with values in $\mathbb R^d$ we will also use the notation
 $$ \vert Y\vert^*_t:=\sup_{s\in[0,t]}\abs{Y(s)} $$
 for any $t\geq 0$ where $\abs{\cdot}$ denotes the Euclidean norm.

\begin{prop}\label{p:(A2) holds}
Let $\beta_\infty$ be the uniform index for $X$ and $\tau$ be a stopping time. Then, we have
  \begin{itemize}
    \item[(A2*)] For any $x\in E$, $\epsilon>0$ there is a random variable $\delta:\Omega\rightarrow\mathbb R_+$ such that for any $\omega\in\{\tau<\infty\}$ and any $t\in[\tau(\omega),\tau(\omega)+\delta(\omega)]$ we have
     $$ \vert X(\omega,\tau(\omega)) - X(\omega,\tau(\omega)+t) \vert \leq \epsilon t^{1/\lambda}$$
     $P_x$-a.s.
  \end{itemize}
\end{prop}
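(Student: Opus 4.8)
The plan is to derive (A2*) from the maximal inequality (12) of \cite{schnurr.13} by a Borel--Cantelli argument along a dyadic sequence of times, with the strong Markov property used to transfer that inequality from a fixed starting point to the random position $X(\tau)$. Fix $x\in E$ and $\epsilon>0$, and recall that $\lambda>\beta_\infty$. Since the inequality is strict, I would first choose an auxiliary exponent $\lambda'$ with $\beta_\infty<\lambda'<\lambda$. By the very definition of $\beta_\infty$ we then have $\limsup_{R\searrow 0}R^{\lambda'}H(R)=0$, so there is $R_0>0$ with $H(R)\le R^{-\lambda'}$ for all $R\in(0,R_0]$; this polynomial decay bound on $H$ is what the rest of the argument runs on.

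Next I would record the one-step estimate. Set $R_n:=2^{-1/\lambda}\epsilon\,2^{-n/\lambda}$, so that $R_n^{\lambda}=2^{-1}\epsilon^{\lambda}2^{-n}$, i.e.\ $2^{-n}=2\epsilon^{-\lambda}R_n^{\lambda}$. Conditioning on $\mathcal F_\tau$ and invoking the strong Markov property, on $\{\tau<\infty\}$ the shifted process $(X(\tau+s))_{s\ge 0}$ is, given $\mathcal F_\tau$, distributed as $X$ started from $X(\tau)$; applying the maximal inequality (12) of \cite{schnurr.13} in its form uniform over starting points (the bound being controlled by $H(\cdot)$ rather than by the localised $H(\cdot,\cdot)$) and then the decay bound on $H$, one gets for all $n$ large enough that $R_n\le R_0$
\begin{align*}
 P_x\Big(\sup_{0\le s\le 2^{-n}}\big|X(\tau+s)-X(\tau)\big|\ge R_n,\ \tau<\infty\Big)
  &\le c_d\,2^{-n}\,H(R_n) \\
  &\le c_d\,2^{-n}R_n^{-\lambda'}
   = 2c_d\,\epsilon^{-\lambda}\,R_n^{\lambda-\lambda'},
\end{align*}
where $c_d$ is the dimensional constant from the maximal inequality and $R_n^{\lambda-\lambda'}$ is a constant multiple of $2^{-n(1-\lambda'/\lambda)}$.

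Because $\lambda'<\lambda$ the exponent $1-\lambda'/\lambda$ is strictly positive, so the right-hand side is summable in $n$. Borel--Cantelli then gives a $P_x$-null set outside of which, for every $\omega\in\{\tau<\infty\}$, there is a (measurable, since defined through countably many measurable events) index $N(\omega)\in\mathbb N$ with $\sup_{0\le s\le 2^{-n}}|X(\omega,\tau(\omega)+s)-X(\omega,\tau(\omega))|<R_n$ for all $n\ge N(\omega)$. I would then define $\delta(\omega):=2^{-N(\omega)}$ on this set and $\delta:=1$ on its complement and on $\{\tau=\infty\}$; this is the required random variable. Finally, for $0<t\le\delta(\omega)$ pick $n\ge N(\omega)$ with $2^{-(n+1)}<t\le 2^{-n}$; then $|X(\omega,\tau(\omega))-X(\omega,\tau(\omega)+t)|\le\sup_{0\le s\le 2^{-n}}|X(\omega,\tau(\omega)+s)-X(\omega,\tau(\omega))|<R_n=2^{-1/\lambda}\epsilon\,2^{-n/\lambda}\le 2^{-1/\lambda}\epsilon\,(2t)^{1/\lambda}=\epsilon\,t^{1/\lambda}$, while $t=0$ is trivial, which is exactly (A2*).

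The step I expect to be the main obstacle is conceptual rather than computational: correctly applying the strong Markov property so that the maximal inequality is used with the \emph{random} base point $X(\tau)$, which forces the use of the starting-point-uniform quantity $H(R)$ and hence of the uniform index $\beta_\infty$ (a merely localised index would not suffice here), combined with the choice of the slack exponent $\lambda'\in(\beta_\infty,\lambda)$. The latter is essential: taking $\lambda'=\lambda$ would only make the Borel--Cantelli terms tend to zero, not be summable, so the hypothesis $\lambda>\beta_\infty$ is consumed precisely at this point.
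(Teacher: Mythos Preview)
Your proof is correct and follows essentially the same route as the paper: transfer the maximal inequality $P_x(|X-x|^*_h\ge R)\le c_d\,h\,H(R)$ to the random base point $X(\tau)$ via the strong Markov property, then run a dyadic Borel--Cantelli argument to upgrade it to the $t^{1/\lambda}$ modulus. The only organisational differences are that the paper first assumes $\tau<\infty$ $P_x$-a.s., derives $\lim_{h\to0}h^{-1/\lambda}|X(\tau+\cdot)-X(\tau)|^*_h=0$ by quoting the Borel--Cantelli step from \cite[Theorem~3.12]{schnurr.13}, and then passes to general $\tau$ via $\tau_k:=\tau\wedge k$; you instead spell out the dyadic sequence $R_n=2^{-1/\lambda}\epsilon\,2^{-n/\lambda}$ explicitly and handle $\{\tau=\infty\}$ inline by restricting every event to $\{\tau<\infty\}$ from the start. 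Both are equally valid; your choice of the slack exponent $\lambda'\in(\beta_\infty,\lambda)$ is exactly the mechanism hidden inside the cited Theorem~3.12.
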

\begin{proof}
We start to show the claim for stopping times which are $P_x$-a.s.\ finite, i.e.\ we assume in step 1 to 3 that $P (\tau=\infty)=0$.

{\em Step 1:}
At first we show for $h,R>0$ and any probability measure $\mu$ on $\bbr^d$
\[
  P_\mu(\vert X-X(0)\vert_h^*\geq R) \leq c_d h H(R).
\]
where $c_d$ depends only on the dimension $d$. By \cite[Proposition 3.10]{schnurr.13} we know that the inequality holds under $P_x$ for any $x\in\bbr^d$. Recall that $H(R)=\sup_{x\in\bbr} H(x,R)$. The desired inequality follows by 
\[
  P_\mu(\vert X-X(0)\vert_h^*\geq R)=\int_{\bbr^d} P_x(\vert X-x\vert_h^*\geq R) \mu(dx) \leq c_d h H(R)
\]
where we have used $P_x(X(0)=x)=1$.\\
{\em Step 2:} Next we show for every $h,R>0$
\begin{align} \label{eq:transfer}
  P_\mu(\vert X-X(0)\vert_h^*\geq R) = P_x(\vert X(\tau+(\cdot))-X(\tau)\vert^*\geq R)
\end{align} 
where we denote by $\mu:=P_x^{X(\tau)}$ the law of $X(\tau)$ under $P_x$. Clearly, we have
\begin{align} 
  \{ (\vert X(\tau+(\cdot))-X(\tau)\vert_h^* \geq R \} &=  \left\{ \sup_{0\leq s \leq h, s \in\bbq} \abs{X(\tau+s)-X(\tau)} \geq R \right\}  \nonumber \\ 
	&= \bigcap_{n\in\mathbb N}\bigcup_{0\leq s \leq h, s\in\bbq} \left\{\abs{ X(\tau+s) - X(\tau) } \geq R-1/n \right\}. \label{eq:repres}
\end{align}
Let us write $Q^n_{\tau,s}:= \{ \abs{X(\tau+s)-X(\tau) } \geq R-1/n \}$ as well as $Q^n_{0,s}:=\{ \abs{X(s)-X(0) } \geq R-1/n \}$ and let $(s_j)_{j\in\bbn}$ be an enumeration of $(0,h]\cap \bbq$ and $s_0:=0$. By the strong Markov property \cite[Equation (1.17), Ch.2]{ethier.kurtz.86} we obtain 
$$P_x^{(X(\tau+s_0),\dots,X(\tau+s_m))} = P_\mu^{(X(s_0),\dots,X(s_m))}$$
for any $m\in\bbn$ and, hence,
\[
  P_x(Q^n_{\tau,s_1} \cup \dots \cup Q^n_{\tau,s_m}) = P_\mu(Q^n_{0,s_1} \cup \dots \cup Q^n_{0,s_m}).
\]
Letting $m\to\infty$ and using the continuity from below of $P_\mu$ and $P_x$ we get 
\[
  P_x \left(\bigcup_{0\leq s \leq h, s\in\bbq} Q^n_{\tau,s}\right) = P_\mu\left( \bigcup_{0\leq s \leq h, s\in\bbq} Q^n_{0,s} \right)
\]
and finally the result by taking a countable intersection as in the representation \eqref{eq:repres} and using continuity from above of the two measures under consideration. \\
{\em Step 3:} Putting the above together, we obtain
\[
  P_x(\vert X(\tau+(\cdot))-X(\tau)\vert_h^*\geq R) \leq c_d h H(R).
\]
Using exactly the same Borel-Cantelli argument as in the proof of \cite[Theorem 3.12]{schnurr.13}, replacing $H(x,R)$ by $H(R)$, we obtain
\[
  \lim_{h\to 0} h^{-1/\lambda} \vert X(\tau+(\cdot))-X(\tau)\vert_h^* = 0
\]
$P_x$-a.s. Hence, there is a $P_x$-null set $N_x$ such that for every $\varepsilon>0$ there exists an $(0,\infty)$-valued random variable $\delta$ such that for any $h\in [\tau(\omega),\tau(\omega)+\delta(\omega)[$ we have:
\[
  h^{-1/\lambda}\abs{(X(\omega,\tau(\omega)+h)-X(\omega,\tau(\omega))} \leq h^{-1/\lambda}\vert X(\omega,\tau(\omega)+(\cdot)) - X(\tau)\vert_h^* \leq \varepsilon 
\]
for $\omega\in\Omega\backslash N_x$, respectively
\[
  \abs{X(\omega,\tau(\omega)+h)-X(\omega,\tau(\omega))} \leq \varepsilon h^{1/\lambda}.
\]

{\em Step 4:} Now assume that $\tau$ is any stopping time and define $\tau_k:=\tau\wedge k$ for $k\in\mathbb N$ which is clearly finite valued. Let $N_k$ be the $P_x$-null set such that the claim holds for $\tau_k$ outside $N_k$ and define the $P_x$-null set $N:=\bigcup_{k\in\mathbb N}N_k$. Let $\epsilon >0$ and $\delta_k$ be the random variable satisfying the claim in accordance with $\tau_k$ and define the random variable
 $$ \delta := \delta_11_{\{\tau\leq 1\}} + \sum_{k\in\mathbb N} \delta_k1_{\{\tau\in(k,k+1]\}}.$$

Let $\omega\in \{\tau<\infty\}\backslash N$. Then, there is a minimal $k\in\mathbb N$ such that $\tau(\omega)<k$ and, hence, $\tau_k(\omega)=\tau(\omega)$. However, we have
 $$ \abs{X(\omega,\tau(\omega)+h)-X(\omega,\tau(\omega))} = \abs{X(\omega,\tau_k(\omega)+h)-X(\omega,\tau_k(\omega))} \leq \varepsilon h^{1/\lambda}$$
 for $h\in [\tau_k(\omega),\tau_k(\omega)+\delta_k(\omega)]=[\tau(\omega),\tau(\omega)+\delta(\omega)]$.
\end{proof}


\begin{thebibliography}{10}

\bibitem{belomestny}
D. Belomenstny, Statistical inference for time-changed L\'evy processes via composite characteristic function estimation, 
\emph{Ann. Statist.} \textbf{39(4)} (2011), 2205--2242.

\bibitem{blumenthalget}
Blumenthal, R.~M. and Getoor, R.~K.
\newblock (1968). {\em Markov {P}rocesses and {P}otential {T}heory}.
\newblock New York, Academic Press.

\bibitem{boettcher.al.13}
B.~B\"ottcher, R.L.~Schilling and J.~Wang, \emph{{L}\'evy matters {III}},
  Springer, Switzerland, 2013.

\bibitem{dengschilling}
C.-S. Deng and R.L. Schilling, On shift Harnack inequalities for subordinate semigroups and moment estimation for L\'evy processes.
 \emph{Stoch. Proc. Appl.} \textbf{125(10)} (2015), 3851--3878.

\bibitem{doering.15}
L.~D\"oring, \emph{A jump-type sde approach to real-valued self-similar Markov
  processes}, To appear, 2015.

\bibitem{dorroh.66}
J.~R. Dorroh, \emph{Contraction semigroups in a function space}, Pacific
  Journal of Mathematics \textbf{19} (1966), 35--38.

\bibitem{engelbert.schmidt.85}
H.~J. Engelbert and W.~Schmidt, \emph{On solutions of stochastic differential
  equations without drift}, Zeitschrift f\"ur Wahrscheinlichkeitstheorie und
  verwandte Gebiete \textbf{68} (1985), 287--317.

\bibitem{ethier.kurtz.86}
S.~Ethier and T.~Kurtz, \emph{Markov {P}rocesses. {C}haracterization and
  {C}onvergence}, Wiley, New York, 1986.

\bibitem{fuglede}
Fuglede, B. 
\newblock (1972). {\em Finely {H}armonic {F}unctions}.
\newblock Berlin, Springer.

\bibitem{gabrielli.teichmann.15}
A.~Gabrielli and J.~Teichmann, \emph{Pathwise construction of affine
  processes}, 2015.

\bibitem{gustafson.lumer.72}
K.~Gustafson and G.~Lumer, \emph{Multiplicative pertubation of semigroup
  generators}, Pacific journal of mathematics \textbf{41} (1972), no.~3.

\bibitem{lamperti.72}
J.~Lamperti., \emph{Semi-stable markov processes. I.}, Zeitschrift f\"ur Wahrscheinlichkeitstheorie und Verwandte Gebiete (1972), no.~22, 205--225.

\bibitem{jacob.93}
N.~Jacob, \emph{Further pseudodifferential operators generating Feller semigroups and Dirichlet forms}, Revista matematica Iberoamericana
  \textbf{9} (1993), no.~2.

\bibitem{jacob.schilling.01}
N.~Jacob and R.L. Schilling, \emph{{L{\'e}vy-type processes and
  pseudo-differential operators}}, L{\'e}vy processes: theory and applications
  (2001), 139--167.

\bibitem{js.87}
J.~Jacod and A.~Shiryaev, \emph{Limit {T}heorems for {S}tochastic {P}rocesses},
  second ed., Springer, Berlin, 2003.

\bibitem{kallsen.04}
J.~Kallsen, \emph{A didactic note on affine stochastic volatility models}, From
  Stochastic Calculus to Mathematical Finance (Yu. Kabanov, R.~Liptser, and
  J.~Stoyanov, eds.), Springer, Berlin, 2006, pp.~343--368.

\bibitem{karatzas.shreve.98}
I.~Karatzas and S.~Shreve, \emph{Methods of mathematical finance}, Springer,
  Berlin, 1998.

\bibitem{schilling98}
R.L.~Schilling, Growth and H\"older conditions for the sample paths of Feller processes.
\emph{Prob. Theory. rel. Fields} \textbf{112} (1998), 565--611. 

\bibitem{schilling.et.al.12}
R.L.~Schilling, R.~Song and Z.~Vondra\v{c}ek, \emph{Bernstein Functions: Theory and Applications}, 2nd edn. De Gruyter, Berlin, 2012

\bibitem{schnurr.13}
A.~Schnurr, \emph{Generalization of the {B}lumenthal-{G}etoor index to the
  class of homogeneous diffusions with jumps and some applications}, Bernoulli
  \textbf{19} (2013), no.~5A.

\bibitem{volkonskii.58}
V.~A. Volkonskii, \emph{Random substitution of time in strong {Markov}
  processes}, Theory of Probability and its Applications \textbf{3} (1958),
  no.~3, 310--326.

\end{thebibliography}

\providecommand{\bysame}{\leavevmode\hbox to3em{\hrulefill}\thinspace}
\providecommand{\MR}{\relax\ifhmode\unskip\space\fi MR }
\providecommand{\MRhref}[2]{%
  \href{http://www.ams.org/mathscinet-getitem?mr=#1}{#2}
}
\providecommand{\href}[2]{#2}

\end{document}